\documentclass[12pt]{amsart}
\usepackage[T1]{fontenc}
\usepackage{amssymb}
\pagestyle{plain}

\let\SavedRightarrow=\Rightarrow
\usepackage{marvosym}
\let\Rightarrow=\SavedRightarrow

\newcommand{\Aaa }{\mathcal A}
\newcommand{\Raa }{\mathcal R}
\newcommand{\Bee }{\mathcal B}
\newcommand{\Cee }{\mathcal C}

\newcommand{\Qee }{\mathcal Q}
\newcommand{\Pee }{\mathcal P}
\newcommand{\Fee }{\mathcal F}
\newcommand{\Uee }{\mathcal U}
\newcommand{\Tee }{\mathcal T}

\newcommand{\See }{\mathcal S}

\newcommand{\cl}{\operatorname{cl}}
\renewcommand{\int}{\operatorname{Int}}

\newtheorem{thm}{Theorem}[section]
\newtheorem{pro}[thm]{Proposition}
\newtheorem{lem}[thm]{Lemma}
\newtheorem{cor}[thm]{Corollary}
\newtheorem{rem}[thm]{Remark}
\newtheorem*{MainTheorem1}{Theorem \ref{represent}}
\newtheorem*{MainTheorem2}{Theorem \ref{4.1}}
\setlength{\parskip}{0.2cm}

\sloppy

\begin{document}

\title{Very I-favorable  spaces}
\subjclass[2000]{Primary: 54B35,  91A44; Secondary: 54C10}
\keywords{Inverse system; very I-favorable space; skeletal map,
$\kappa$-metrizable compact space, d-open map}

\author{A. Kucharski}
\address{Institute of Mathematics, University of Silesia, ul. Bankowa 14, 40-007 Katowice}
\email{akuchar@ux2.math.us.edu.pl}

\author{Sz. Plewik}
\address{Institute of Mathematics, University of Silesia, ul. Bankowa 14, 40-007 Katowice}
\email{plewik@math.us.edu.pl}

\author{V. Valov}
\address{Department of Computer Science and Mathematics, Nipissing University,
100 College Drive, P.O. Box 5002, North Bay, ON, P1B 8L7, Canada}
\email{veskov@nipissingu.ca}
\thanks{The third author was supported by NSERC Grant 261914-08}

\begin{abstract}
We prove that a Hausdorff space $X$ is very $\mathrm I$-favorable if
and only if $X$ is the almost limit space of a $\sigma$-complete inverse
system consisting of (not necessarily Hausdorff) second countable
spaces and surjective d-open bonding maps. It is also shown that the
class of Tychonoff very $\mathrm I$-favorable spaces with respect to
the co-zero sets coincides with the d-openly generated spaces.
\end{abstract}

\maketitle \markboth{}{Very I-favorable}

\section{Introduction}
The classes of I-favorable and very I-favorable spaces were
introduced by P. Daniels, K. Kunen and H. Zhou \cite{dkz}. Let us
recall the corresponding definitions. Two players are playing the so
called \textit{open-open game} in a space $(X,\Tee_X)$, a round
consists of player I choosing a nonempty open set $U\subset X$ and
player II a nonempty open set $V\subset U$; I wins if the union of
II's open sets is dense in $X$, otherwise II wins. A space $X$ is
called I-\textit{favorable} if player I has a winning strategy. This
means that there exists a function $\sigma:\bigcup\{\Tee_X^n:n\geq
0\}\to\Tee_X$ such that for each game
$$\sigma(\varnothing),B_0,\sigma(B_0),B_1,\sigma(B_0,B_1),B_2,\ldots,
B_n,\sigma(B_0,\ldots,B_n),B_{n+1},\ldots$$
the union  $\bigcup_{n\geq 0}B_n$ is dense in $X$, where
$\emptyset\not=\sigma(\varnothing)\in \Tee_X$ and
$B_{k+1}\subset\sigma(B_0,B_1,..,B_k)\not=\emptyset$ and
$\emptyset\not=B_k \in \Tee_X$ for $k\geq 0$.

A family $\Cee\subset\mathcal [\Tee_X]^{\leq \omega}$ is said to be
a \textit{club} if:  (i) $\Cee$ is closed under increasing
$\omega$-chains, i.e., if $C_1\subset C_2\subset...$ is an
increasing $\omega$-chain from $\Cee$, then $\bigcup_{n\geq
1}C_n\in\Cee$; (ii) for any $B\in[\Tee_X]^{\leq\omega}$ there exists
$C\in\Cee$ with $B\subset C$.

Let us recall \cite[p. 218]{kun}, that $C\subset_c\Tee_X$  means that
for any nonempty $V\in\Tee_X$ there
exists $W\in C$ such that if  $U\in C$ and $U\subset W$, then $U\cap
V\neq\varnothing$.
A space $X$ is  \textit{$\mathrm
I$-favorable} if and only if the family $$\{\Pee\in [\Tee_X]^{\leq
\omega}:\Pee\subset_c\Tee_X\}$$ contains a club, see \cite[Theorem 1.6]{dkz}.

A space $X$ is called \textit{very $\mathrm
I$-favorable} if the family $$\{\Pee\in [\Tee_X]^{\leq
\omega}:\Pee\subset_!\Tee_X\}$$ contains a club. Here,
$\Pee\subset_!\Tee_X$ means that for any $\See\subset \Pee$ and $x
\notin \cl_X \bigcup\See$, there exists $W\in\Pee$ such that $x\in W$
and $W\cap\bigcup \See=\emptyset$. It is easily seen that
$\Pee\subset_!\Tee_X$ implies $\Pee\subset_c\Tee_X$.

 It was shown by the first two authors in \cite{kp8} that a compact Hausdorff
space is I-favorable if and only if it can be represented as the
limit of a $\sigma$-complete (in the sense of Shchepin \cite{s76})
inverse system consisting of I-favorable
compact metrizable  spaces and skeletal bonding maps, see also \cite{kp7}
and \cite{kp9}. For similar characterization of I-favorable spaces with respect to
co-zero sets, see \cite{vv}.   Recall that a continuous map $f:X \to Y$ is called
\textit{skeletal} if the set $\int_Y\cl_Y f(U)$ is non-empty, for
any $U\in \Tee_X$, see \cite{mr}.

In this paper we show that there exists an analogy between the
relations I-favorable spaces - skeletal maps and very I-favorable
spaces - d-open maps (see Section 2 for the definition of d-open
maps). The following two theorems are our main results:

\begin{MainTheorem1}
A regular space $X$ is very $\mathrm I$-favorable if and only if
$X=\displaystyle\mathrm{a}-\underleftarrow{\lim}S$, where $S=\{X_A,
q^A_B,\Cee\}$  is a $\sigma$-complete inverse system such that all
$X_A$ are $($not-necessarily Hausdorff$)$ spaces with countable
weight and the bonding maps $q^A_B$ are d-open and onto.
\end{MainTheorem1}

\begin{MainTheorem2}
A completely regular space $X$ is very $\mathrm I$-favorable with
respect to the co-zero sets if and only if $X$ is $\mathrm d$-openly
generated.
\end{MainTheorem2}

Here, a completely regular space $X$ is d-\textit{openly generated}
if there exists a $\sigma$-complete inverse system $\displaystyle
S=\{X_\sigma, \pi^{\sigma}_\varrho, \Gamma\}$ consisting of
separable metric spaces $X_\sigma$ and d-open surjective bonding
maps $\pi^{\sigma}_\varrho$ with $X$ being embedded in
$\displaystyle\underleftarrow{\lim}S$ such that
$\pi_\sigma(X)=X_\sigma$ for each $\sigma\in\Gamma$.

Theorem 4.1 implies the following characterization of
$\kappa$-metrizable compacta (see Corollary 4.3), which provides an
answer of a question from \cite{vv}: A compact Hausdorff space is
very $\mathrm I$-favorable with respect to the co-zero sets if and
only if $X$ is $\kappa$-metrizable.

\section{Very I-favorable spaces and d-open maps}

T. Byczkowski and R. Pol \cite{byp} introduced nearly open sets and
nearly open maps as follows. A subset of a topological space is
\textit{nearly open} if it is in the interior of its closure. A map
is  \textit{nearly open} if the image of every open subset is nearly
open. Continuous nearly open maps were called d-\textit{open} by M.
Tkachenko \cite{tk}. Obviously, every d-open map is skeletal.

\begin{pro}\label{d-open}
Let $(X,\Tee_X)$ and $(Y,\Tee_Y)$ be topological spaces and $f:X \to Y$
a continuous function. Then the following conditions are equivalent:
\begin{enumerate}
\item $f$ is d-open;
\item $\cl_Xf^{-1}(V)=f^{-1}(\cl_Y V)$ for any open $V\subset Y$;
\item $f(U)\subset \int_Y\cl_Y f(U)$ for every open subset $U\subset X$;
\item $\{f^{-1}(V):V\in\Tee_Y\}\subset_!\Tee_X$.
\end{enumerate}
\end{pro}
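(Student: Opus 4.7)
The plan is to close the cycle $(1) \Leftrightarrow (3) \Rightarrow (2) \Rightarrow (4) \Rightarrow (3)$. The equivalence $(1)\Leftrightarrow(3)$ is immediate: since $f$ is assumed continuous, $f$ is d-open precisely when each image $f(U)$ is a nearly open subset of $Y$, and by definition a set $A\subset Y$ is nearly open iff $A\subset\int_Y\cl_Y A$, which is exactly $(3)$ applied to $A=f(U)$.

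For $(3)\Rightarrow(2)$, the inclusion $\cl_X f^{-1}(V)\subset f^{-1}(\cl_Y V)$ follows from continuity, so I focus on the reverse inclusion. Given $x\in f^{-1}(\cl_Y V)$ and an open neighbourhood $U\ni x$, condition $(3)$ makes $\int_Y\cl_Y f(U)$ an open neighbourhood of $f(x)\in\cl_Y V$; it therefore meets $V$, whence $\cl_Y f(U)\cap V\neq\emptyset$ and, as $V$ is open, $f(U)\cap V\neq\emptyset$. This gives $U\cap f^{-1}(V)\neq\emptyset$, so $x\in\cl_X f^{-1}(V)$.

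For $(2)\Rightarrow(4)$, take $\See\subset\{f^{-1}(V):V\in\Tee_Y\}$ and write $V^{*}=\bigcup\{V\in\Tee_Y:f^{-1}(V)\in\See\}$, so that $\bigcup\See=f^{-1}(V^{*})$. If $x\notin\cl_X f^{-1}(V^{*})$, then $(2)$ gives $f(x)\notin\cl_Y V^{*}$, hence $W_0:=Y\setminus\cl_Y V^{*}$ is an open neighbourhood of $f(x)$ disjoint from $V^{*}$; thus $f^{-1}(W_0)$ lies in the family, contains $x$, and is disjoint from $\bigcup\See$, establishing the $\subset_!$ property.

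The main obstacle is $(4)\Rightarrow(3)$. Given open $U$ and $y=f(x)\in f(U)$, set $W=Y\setminus\cl_Y f(U)$; since $f(U)\cap W=\emptyset$, the open set $U$ is disjoint from $f^{-1}(W)$ and so $x\notin\cl_X f^{-1}(W)$. Applying $(4)$ to $\See=\{f^{-1}(W)\}$ yields an open $V\subset Y$ with $y\in V$ and $f^{-1}(V\cap W)=\emptyset$. The crucial step is to conclude $V\cap W=\emptyset$, which gives $V\subset\cl_Y f(U)$ and hence $y\in\int_Y\cl_Y f(U)$. This is transparent when $f$ has dense image in $Y$ (the standing framework for d-open maps in this paper): the open set $V\cap W$, if nonempty, would have to meet $f(X)$ and thus carry nonempty preimage, a contradiction.
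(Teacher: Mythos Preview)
Your argument is correct, and structurally it differs from the paper's in two ways. The paper closes the cycle $(1)\Rightarrow(2)\Rightarrow(3)\Rightarrow(1)$ (citing Tkachenko \cite{tk} for $(1)\Rightarrow(2)$ and deriving $(2)\Rightarrow(3)$ by a complement/preimage calculation) and then establishes $(2)\Leftrightarrow(4)$ separately. You instead take $(1)\Leftrightarrow(3)$ as definitional, prove $(3)\Rightarrow(2)$ directly by a short neighbourhood argument, and then close $(2)\Rightarrow(4)\Rightarrow(3)$. Your route is more self-contained (no external citation is needed) and your $(3)\Rightarrow(2)$ is arguably cleaner than the paper's $(2)\Rightarrow(3)$; your $(2)\Rightarrow(4)$ coincides with the paper's.

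On the dense-image caveat in $(4)\Rightarrow(3)$: you are right that passing from $f^{-1}(V\cap W)=\emptyset$ to $V\cap W=\emptyset$ requires $f(X)$ to be dense in $Y$, and without that hypothesis the implication $(4)\Rightarrow(3)$ genuinely fails (for instance, the inclusion $[0,\tfrac12]\hookrightarrow[0,1]$ satisfies $(4)$, since $\{f^{-1}(V):V\in\Tee_Y\}=\Tee_X$, but not $(3)$). The paper's own proof of $(4)\Rightarrow(2)$ makes exactly the same silent move---it concludes ``$V\cap U=\emptyset$'' from ``$f^{-1}(U)\cap f^{-1}(V)=\emptyset$''---so you have not introduced a new gap but rather made explicit an assumption the paper uses tacitly. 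Calling it ``the standing framework'' is a slight overstatement, since the proposition does not declare it; still, every application of Proposition~\ref{d-open} in the paper is to surjections, so nothing downstream is affected.
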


\begin{proof}
The implication $(1)\Rightarrow (2)$ was established in \cite[Lemma
5]{tk}. Obviously $(3)\Rightarrow (1)$. Let us prove the implication
$(2)\Rightarrow (3)$. Suppose
$U\subset X$ is open. Then we have
$X\setminus f^{-1} (\int_Y \cl_Y f(U) ) \subset X \setminus U$.
Indeed, $Y\setminus\int_Y\cl_Y f(U)=\cl_Y(Y\setminus\cl_Y
f(U))$ and by $(2)$ we get $$f^{-1}(\cl_Y(Y\setminus\cl_Y
f(U)))=\cl_X(f^{-1}(Y \setminus\cl_Y f(U))).$$ But
$\cl_X(f^{-1}(Y \setminus\cl_Y f(U)))= \cl_X(X\setminus f^{-1}(\cl_Y f(U)))$
and
$$ X\setminus f^{-1}(\cl_Y f(U))\subset X\setminus\cl_X f^{-1}(
f(U))\subset X\setminus \cl_X U\subset X\setminus U.
$$  Hence $f(U)\cap Y\setminus\int_Y\cl_Y f(U) =\emptyset$ and
$f(U)\subset\int_Y\cl_Y f(U)$.

To show $(4)\Rightarrow (2)$, assume that
$\{f^{-1}(V): V \in \Tee_Y\} \subset_!\Tee_X$.  Since $f$ is continuous
we get $\cl_X f^{-1}(V)\subset f^{-1}(\cl_Y V)$ for   any open set
$V \subset Y$.  We shall show that $f^{-1}(\cl_Y V) \subset \cl_X f^{-1}(V)
 $ for   any open  $V \subset Y$.    Suppose there
exists an open set $V\subset Y$ such that $$ f^{-1}(\cl_Y
V)\setminus\cl_X f^{-1}(V)\ne\emptyset.$$ Let $x\in f^{-1}(\cl_Y
V)\setminus\cl_X f^{-1}(V)$ and $\See=\{f^{-1}(V)\}$. Since
$x\not\in\cl_X\bigcup\See=\cl_X f^{-1}(V)$, there is an open set
$U\in\Bee_Y$ such that $x\in f^{-1}(U)$ and $f^{-1}(U)\cap
f^{-1}(V)=\emptyset.$ Therefore, $f(x)\in U\cap\cl_Y V$ which
contradicts $V\cap U=\emptyset$.

Finally, we can show that (2) yields $\{f^{-1}(V):V\in\Tee_Y\}
\subset_!\Tee_X$. Indeed, let $\See\subset\{f^{-1}(V): V\in
\Tee_Y\}$ and $x\not\in\cl_X\bigcup\See$. Then there is $U\in\Tee_Y$
such that $\bigcup \See=f^{-1}(U)$. Hence,
$\cl_X\bigcup\See=f^{-1}(\cl_Y U)$. Put $$W=f^{-1}(Y\setminus \cl_Y
U).$$
 We have $x\in W$ and $W\cap\cl_X\bigcup\See=\emptyset$.
\end{proof}
\begin{rem}\label{2.2}
If, under the hypotheses of Proposition \ref{d-open}, there exists a base
$\Bee_Y\subset\Tee_Y$ with
$\{f^{-1}(V):V\in\Bee_Y\}\subset_!\Tee_X$, then $f$ is d-open.
\end{rem}

Indeed, we can follow the proof of the implication $(4)\Rightarrow
(2)$ from Proposition \ref{d-open}. The only difference is the choice of the
family $\See$. If there exists $x\in f^{-1}(\cl_Y V)\setminus\cl_X
f^{-1}(V)$ for some open $V\subset Y$, we choose
$\See=\{f^{-1}(W):W\in\Bee_Y{~}\mbox{ and }{~}W\subset V\}$.

Next lemma was established in \cite[Lemma 9]{tk}.

\begin{lem}\label{d-open1}
Let $f:X\to Y$  and $g\colon Y\to Z$ be continuous maps with $f$
being surjective. Then $g$ is d-open provided so is $g\circ f$. \hfill $\Box$
\end{lem}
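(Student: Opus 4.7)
The plan is to reduce the statement to the equivalent condition $(3)$ in Proposition~\ref{d-open}, namely that a continuous map $h$ is d-open precisely when $h(W)\subset\int\cl h(W)$ for every open $W$ in the domain. Given an arbitrary open $V\subset Y$, my goal is to verify this inclusion with $h=g$ and $W=V$.

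First I would use the continuity of $f$ to observe that $U:=f^{-1}(V)$ is open in $X$. Since $g\circ f$ is d-open, applying Proposition~\ref{d-open}$(3)$ to $U$ yields
\[
(g\circ f)(U)\subset\int_Z\cl_Z(g\circ f)(U).
\]
Next I would invoke the surjectivity of $f$: it guarantees $f(f^{-1}(V))=V$, and therefore $(g\circ f)(U)=g(f(f^{-1}(V)))=g(V)$. Substituting this equality on both sides of the displayed inclusion produces $g(V)\subset\int_Z\cl_Z g(V)$, which is exactly the d-openness criterion for $g$ by Proposition~\ref{d-open}$(3)$.

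The only subtle point — and essentially the sole obstacle, though a mild one — is the role of surjectivity: without it one would only have $f(f^{-1}(V))\subset V$, hence merely $(g\circ f)(U)\subset g(V)$, which is not enough to transfer d-openness from the composition back to $g$. Once this observation is made, the argument is purely formal and uses nothing beyond the equivalence $(1)\Leftrightarrow(3)$ from Proposition~\ref{d-open}.
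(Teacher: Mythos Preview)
Your argument is correct: surjectivity of $f$ gives $f(f^{-1}(V))=V$, so $(g\circ f)(f^{-1}(V))=g(V)$, and then condition~(3) of Proposition~\ref{d-open} applied to $g\circ f$ immediately yields the same condition for $g$. The paper itself offers no proof of this lemma---it is merely cited from \cite[Lemma 9]{tk}---so there is nothing to compare against; your route through Proposition~\ref{d-open}(3) is the natural short argument.
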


Let  $X$ be a topological space equipped with a topology $\Tee_X$
and $\Qee\subset\Tee_X$. Suppose that  there exists a function
$\sigma :\bigcup\{ \Qee^n: n\geq 0\} \to \Qee $  such that if  $B_0,
B_1, \ldots\;$ is a sequence of non-empty elements of $\Qee$ with
$B_0 \subset \sigma(\varnothing)$ and $B_{n+1} \subset\sigma((B_0,
B_1, \ldots, B_n))$ for all $n\in \omega$, then $\{B_n:n\in
\omega\}\cup\{\sigma((B_0, B_1, \ldots, B_n)):n\in
\omega\}\subset_!\Qee$. The function $\sigma$ is called a
\textit{strong winning strategy in $\Qee$}. If $\Qee=\Tee_X$,
$\sigma$ is called a strong winning strategy. It is clear that if
$\sigma$ is strong winning strategy, then it is a winning strategy
for player I in the open-open game.

\begin{lem}\label{2.4}
Let $\sigma :\bigcup\{ \Qee^n: n\geq 0\} \to \Qee $ be a strong
winning strategy in $\Qee$, where $\Qee$ is a family of  open subsets of $X$.
 Then $\Pee\subset_!\Qee$ for every family
$\Pee\subset\Qee$ such that $\Pee$ is closed under $\sigma$ and
finite intersections.
\end{lem}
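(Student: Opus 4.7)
The plan is game-theoretic: given $\See \subset \Pee$ and $x \notin \cl_X \bigcup \See$, I will run $\sigma$ through a legal play in $\Pee$ tailored to $\See$, obtain from the hypothesis a family $\Raa \subset \Pee$ satisfying $\Raa \subset_! \Qee$, and extract the desired $W$ from $\Raa$. Throughout I take $\See = \{S_n : n \in \omega\}$ to be countable; this is the case of interest, since in the intended applications $\Pee$ itself is a countable member of a club.

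I inductively define the play $B_0, B_1, \ldots$ in $\Pee$ by setting
\[
B_0 = \begin{cases} \sigma(\varnothing) \cap S_0, & \text{if } \sigma(\varnothing) \cap S_0 \ne \varnothing, \\ \sigma(\varnothing), & \text{otherwise}, \end{cases}
\]
and, for $n \geq 0$, $B_{n+1} = \sigma(B_0, \ldots, B_n) \cap S_{n+1}$ whenever this intersection is nonempty, and $B_{n+1} = \sigma(B_0, \ldots, B_n)$ otherwise. By closure of $\Pee$ under $\sigma$ and under finite intersections, each $B_n \in \Pee$ and the play is legal; hence the family
\[
\Raa = \{B_n : n \in \omega\} \cup \{\sigma(B_0, \ldots, B_n) : n \in \omega\} \subset \Pee
\]
satisfies $\Raa \subset_! \Qee$ by hypothesis.

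To extract $W$, I will apply $\Raa \subset_! \Qee$ to a subfamily $\See'' \subset \Raa$ that encodes $\See$. For each $n$, if $\sigma(B_0, \ldots, B_{n-1}) \cap S_n \ne \varnothing$ then $B_n \subset S_n$ is a ``piece'' of $S_n$ sitting inside $\Raa$, while if $\sigma(B_0, \ldots, B_{n-1}) \cap S_n = \varnothing$ then the $\sigma$-image $\sigma(B_0, \ldots, B_{n-1}) \in \Raa$ is already disjoint from $S_n$ entirely. Assembling these witnesses into a single $\See''$ and arguing $\bigcup \See \subset \cl_X \bigcup \See''$ while $x \notin \cl_X \bigcup \See''$, the $\subset_!$ property of $\Raa$ supplies $W \in \Raa \subset \Pee$ with $x \in W$ and $W \cap \bigcup \See'' = \varnothing$; openness of $W$ then forces $W \cap \bigcup \See = \varnothing$, as required.

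The main obstacle is rigorously establishing $\bigcup \See \subset \cl_X \bigcup \See''$, since the naive enumeration above visits each $S_n$ only once and offers no a priori reason for $\{B_k : B_k \subset S_n\}$ to be dense in $S_n$. I expect the final construction to use a diagonal enumeration in which every $S_n$ is revisited infinitely often along the play, yielding a countable collection of $B$'s inside each $S_n$ whose union is dense in $S_n$; the ``bad'' case then contributes via the corresponding $\sigma$-image. Merging the two case-analyses into a single $\See''$ that covers $\bigcup \See$ up to closure while still avoiding $x$ in its closure is the delicate point.
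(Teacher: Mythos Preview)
Your overall plan---run a $\sigma$-play inside $\Pee$ tailored to $\See$, obtain a family $\Raa \subset_! \Qee$, and extract $W$ from $\Raa$---is exactly the paper's strategy. But the construction itself has a real gap, and the diagonal repair you propose does not close it.

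The difficulty is that at step $n$ you intersect $\sigma(B_0,\ldots,B_{n-1})$ with the \emph{specific} set $S_n$ rather than with $\bigcup\See$. This destroys the dichotomy you need. Even after passing to a diagonal enumeration that revisits each $S_n$ infinitely often, a ``miss'' move $B_k=\sigma(B_0,\ldots,B_{k-1})$ (taken because $\sigma(\ldots)\cap S_{f(k)}=\varnothing$) may still meet $\bigcup\See$ through some other $S_m$. So when you take a nonempty open $W\subset\bigcup\See$, density of the play only yields $W\cap B_k\neq\varnothing$ for \emph{some} $k$, and nothing forces that $B_k$ into your $\See''$. The pieces $B_k\subset\bigcup\See$ need not be dense there, and you cannot merge in the ``miss'' $\sigma$-values without risking $x\in\cl_X\bigcup\See''$.

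The paper's remedy is a one-line change: at each step test $\sigma(V_0,\ldots,V_{n-1})$ against all of $\bigcup\See$. If they meet, pick \emph{any} $U\in\See$ with $\sigma(\ldots)\cap U\neq\varnothing$ and set $V_n=\sigma(\ldots)\cap U$; otherwise set $V_n=\sigma(\ldots)$. Now every $V_n$ is either contained in $\bigcup\See$ or disjoint from it. Since $\sigma$ is winning, $\bigcup_n V_n$ is dense in $X$, so $\Uee=\{V_n:V_n\subset\bigcup\See\}$ satisfies $\cl_X\bigcup\Uee=\cl_X\bigcup\See$. Applying $\Raa\subset_!\Qee$ to $\Uee$ produces $W\in\Raa\subset\Pee$ with $x\in W$ and $W\cap\bigcup\Uee=\varnothing$; since $W$ is open and $\cl_X\bigcup\Uee=\cl_X\bigcup\See$, this forces $W\cap\bigcup\See=\varnothing$. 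No enumeration of $\See$ is required, so your countability hypothesis on $\See$ is unnecessary.
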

\begin{proof} Let $\Pee\subset\Qee$ be closed under $\sigma$ and finite
intersections.
 Fix a family
$S\subset \Pee$ and $x\not\in\cl\bigcup S$.
If $\sigma(\varnothing)\cap\bigcup S\not =\emptyset$, then take an element
$U\in S$ such that $\sigma(\varnothing)\cap U\not =\emptyset$ and put
$V_{0}= \sigma(\varnothing)\cap U\in \Pee.$
If $\sigma(\varnothing)\cap\bigcup S=\emptyset$, then put
$V_{0}=\sigma(\varnothing)\in \Pee.$ Assume that sets
$V_0,\dots ,V_n \in \Pee $ are just defined.
If $\sigma(V_0,\dots ,V_n)\cap\bigcup S\not =\emptyset$,
then take an element $U\in S$ such that
$\sigma(V_0,\dots ,V_n)\cap U\not =\emptyset$ and put
$V_{n+1}= \sigma(V_0,\dots ,V_n)\cap U\in \Pee.$
If $\sigma(V_0,\dots ,V_n)\cap\bigcup S=\emptyset$, then put
$V_{n+1}=\sigma(V_0,\dots ,V_n)\in \Pee.$ Take a subfamily
$$\Uee=\{V_{k}: V_k\cap \bigcup S\not =\emptyset \mbox{ and }
k\in \omega \}\subset\Qee.$$ Since $\sigma $ is strong strategy,
then   $\bigcup\{V_n:n\in \omega\}$ is dense in $X$. Hence
$\cl\bigcup \Uee=\cl\bigcup S$. Since $\{V_n:n\in \omega\}\cup
\{\sigma((V_0, V_1, \ldots, V_n)):n\in \omega\}\subset_!\Qee$
there exists $V\in \{V_n:n\in \omega\}\cup
\{\sigma((V_0, V_1, \ldots, V_n)):n\in \omega\}\subset \Pee$ such that
$x\in V$ and $V\cap \bigcup S=\emptyset$.
\end{proof}

\begin{pro}\label{club-strategy}
Let $X$ be a topological space and $\Qee\subset\Tee_X$ be a family
closed under finite intersection. Then there is a strong winning
strategy  $\sigma :\bigcup \{ \Qee^n: n\geq 0\}\to \Qee $ in $\Qee$
if and only if the family $\{\Pee\in [\Qee]^{\leq \omega}:
\Pee\subset_! \Qee\}$ contains a club $\Cee$ such that every
$A\in\Cee$ is closed under finite intersections.
\end{pro}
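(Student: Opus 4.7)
The plan is to prove each implication separately. For the forward direction, supposing $\sigma$ is a strong winning strategy in $\Qee$, I would take
\[
\Cee=\{\Pee\in[\Qee]^{\leq\omega}:\Pee\text{ is closed under }\sigma\text{ and under finite intersections}\}.
\]
Lemma \ref{2.4} immediately yields $\Pee\subset_!\Qee$ for every $\Pee\in\Cee$, so it remains to verify that $\Cee$ is a club. Closure under increasing $\omega$-chains is automatic, because both closure conditions commute with such unions. For cofinality in $[\Qee]^{\leq\omega}$, starting from a given countable $B\subset\Qee$ I would iteratively adjoin, over $\omega$ many stages, all $\sigma$-values on finite sequences from the current family together with all pairwise intersections; each stage introduces only countably many new sets, so the union $C\in\Cee$ stays countable and contains $B$.

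For the backward direction, given the club $\Cee$, I would construct $\sigma$ by recursion on the length of the argument. Simultaneously, to each finite sequence $s=(B_0,\ldots,B_n)\in\Qee^{<\omega}$ I would assign a family $\Pee_s\in\Cee$ together with a fixed enumeration $\Pee_s=\{W_j^s:j\in\omega\}$, subject to the monotonicity requirement $\Pee_{(B_0,\ldots,B_{n-1})}\cup\{B_n\}\subset\Pee_s$, with $\Pee_\varnothing$ chosen arbitrarily in $\Cee$; existence of $\Pee_s$ follows from cofinality of $\Cee$. Fixing a bijection $\langle\cdot,\cdot\rangle\colon\omega\times\omega\to\omega$ with $\langle i,j\rangle\geq i$, I will set
\[
\sigma(B_0,\ldots,B_n)=W_j^{(B_0,\ldots,B_i)}\quad\text{whenever }n=\langle i,j\rangle,
\]
and $\sigma(\varnothing)=W_0^\varnothing$. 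Along any legal play $B_0,B_1,\ldots$, the families $\Pee_{(B_0,\ldots,B_n)}$ form an increasing $\omega$-chain in $\Cee$, so their union $\Pee_\infty$ lies in $\Cee$, hence $\Pee_\infty\subset_!\Qee$. Every $B_n$ and every $\sigma$-value lies in $\Pee_\infty$ by construction, and conversely the pairing-based bookkeeping guarantees that each $W=W_j^{(B_0,\ldots,B_i)}\in\Pee_\infty$ is realised as $\sigma(B_0,\ldots,B_n)$ at stage $n=\langle i,j\rangle$. Consequently the play family $\{B_n:n\in\omega\}\cup\{\sigma(B_0,\ldots,B_n):n\in\omega\}$ coincides with $\Pee_\infty$ and inherits the property $\subset_!\Qee$.

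The hard part will be precisely this bookkeeping in the backward direction. It would not suffice to merely confine the play family inside some $\Pee_\infty\in\Cee$, because the relation $\subset_!$ is not monotone under passage to subfamilies: the separating witness supplied by $\Pee_\infty\subset_!\Qee$ might lie outside the play family. Forcing the play family to equal $\Pee_\infty$ is what dictates the pairing together with the simultaneous enumeration of each $\Pee_s$, ensuring that every potential separating witness in $\Pee_\infty$ reappears as a $\sigma$-value of the play.
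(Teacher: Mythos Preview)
Your proposal is correct and matches the paper's approach. The forward direction is identical to the paper's proof (take $\Cee$ to be the countable subfamilies of $\Qee$ closed under $\sigma$ and finite intersections, invoke Lemma~\ref{2.4}, and verify the club properties); for the backward direction the paper simply refers to the construction in \cite[Theorem~1.6]{dkz}, and your bookkeeping with the pairing function is exactly that argument written out in full, including the key observation that one must arrange the play family to \emph{equal} some $\Pee_\infty\in\Cee$ rather than merely be contained in it.
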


\begin{proof}
If there is a club $\Cee\subset\{\Pee\in [\Qee]^{\leq \omega}:
\Pee\subset_! \Qee\}$, then following the arguments from
\cite[Theorem 1.6]{dkz} one can construct a strong winning strategy
in $\Qee$.

Suppose there exists a strong winning strategy $\sigma :\bigcup \{
\Qee^n: n\geq 0\} \to \Qee $. Let $\Cee$ be the family of all
countable subfamilies $A\subset\Qee$ such that $A$ is closed under
$\sigma$ and finite intersections. The family
$\Cee\subset [\Qee]^{\leq \omega}$ is a club.
Obviously, $\Cee$ is closed under increasing $\omega$-chains. If
$B\in [\Qee]^{\leq \omega}$, there exists a countable family
$A_B\subset\Qee$ which contains $B$ and is closed under $\sigma$ and
finite intersections. So, $A_B\in\Cee$. According to Lemma \ref{2.4},
$A\subset_! \Qee$ for all
$A\in\Cee$.
\end{proof}

\begin{cor}\label{2.6}
A Hausdorff space $(X,\Tee)$ is very $\mathrm I$-favorable if and only
if the family $\{\Pee\in [\Tee]^{\leq \omega}: \Pee\subset_! \Tee\}$
contains a club $\Cee$ with the following properties:
\begin{itemize}
\item[(i)] every $A\in\Cee$ covers $X$ and it is closed under finite
intersections;
\item[(ii)] for any two different points $x,y\in  X$ there exists
$A\in\Cee$ containing two disjoint elements $U_x,U_y\in A$ with
$x\in U_x$ and $y\in U_y$;
\item[(iii)] $\bigcup\Cee=\Tee$. \hfill $\Box$
\end{itemize}
\end{cor}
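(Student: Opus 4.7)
The plan is to deduce the corollary directly from Proposition \ref{club-strategy} together with Lemma \ref{2.4}, by enlarging the club that comes from very I-favorability so that properties (i)--(iii) are built in. The ``if'' direction is immediate: if such a $\Cee$ exists, then in particular $\Cee \subset \{\Pee\in[\Tee]^{\leq\omega}:\Pee\subset_!\Tee\}$ is a club, which by definition says $X$ is very I-favorable.

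For the ``only if'' direction, I would first apply Proposition \ref{club-strategy} with $\Qee = \Tee$: since $X$ is very I-favorable, the family $\{\Pee\in[\Tee]^{\leq\omega}:\Pee\subset_!\Tee\}$ contains a club, and the proof of Proposition \ref{club-strategy} produces a strong winning strategy $\sigma\colon\bigcup\{\Tee^n : n\geq 0\}\to\Tee$. Then define
$$\Cee = \{A\in [\Tee]^{\leq\omega} : X\in A,\ A\text{ is closed under }\sigma\text{ and under finite intersections}\}.$$
By Lemma \ref{2.4}, every $A\in\Cee$ satisfies $A\subset_!\Tee$, so $\Cee$ is contained in the target family.

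Next I would verify that $\Cee$ is a club. Closure under increasing $\omega$-chains is routine, since each application of $\sigma$ or of finite intersection uses only finitely many elements and hence lies in some term of the chain; and $X$ remains in the union. For the absorption property, given $B\in[\Tee]^{\leq\omega}$ I would construct $A\supset B$ in $\Cee$ by the usual closing-off process: put $A_0 = B\cup\{X\}$ and recursively let $A_{n+1}$ be $A_n$ together with $\sigma(B_0,\dots,B_k)$ for all finite tuples from $A_n$ and all finite intersections of members of $A_n$; each $A_n$ is countable, so $A=\bigcup_n A_n$ is a countable member of $\Cee$ containing $B$.

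It remains to check (i)--(iii). Property (i) is built into the definition of $\Cee$, since $X\in A$ forces $A$ to cover $X$ and $A$ is closed under finite intersections. For (ii), given distinct points $x,y\in X$, Hausdorffness supplies disjoint open $U_x, U_y\in\Tee$ separating them; applying the club property to $\{U_x,U_y\}$ yields $A\in\Cee$ containing both. For (iii), any $V\in\Tee$ is absorbed into some $A\in\Cee$ by the same argument, so $V\in\bigcup\Cee$. I do not anticipate a real obstacle here: the corollary is essentially a bookkeeping strengthening of Proposition \ref{club-strategy}, and the only thing to watch is that the enrichments preserve $\subset_!\Tee$, which is exactly what Lemma \ref{2.4} delivers.
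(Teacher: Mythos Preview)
Your proposal is correct and is exactly the argument the paper has in mind: the corollary is stated with a $\Box$ and no proof because it follows by the very enlargement-and-closing-off procedure you describe, using Proposition~\ref{club-strategy} to pass to a strong winning strategy and Lemma~\ref{2.4} to certify $A\subset_!\Tee$ for each $A$ in the enriched club. The only addition beyond the proof of Proposition~\ref{club-strategy} is throwing $X$ into every $A$ to guarantee the covering in (i), and this is harmless for the closing-off and for Lemma~\ref{2.4}.
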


The next proposition shows that every space $X$ having a base
$\Bee_X$ such that the family $\{\Pee\in [\Bee_X]^{\leq \omega}:
\Pee\subset_! \Bee_X\}$ contains a club is very I-favorable.

\begin{pro}\label{2.7} If there exists  a base $\Bee$ of  $X$ such that
the family $\{\Pee\in [\Bee]^{\leq \omega}: \Pee\subset_! \Bee\}$
contains a club,  then the family $\{\Pee\in [\Tee_X]^{\leq \omega}:
\Pee\subset_! \Tee_X\}$  contains a club too.
\end{pro}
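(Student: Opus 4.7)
The plan is to lift the given club $\Cee_0 \subset \{\Pee\in[\Bee]^{\leq\omega}:\Pee\subset_!\Bee\}$ from the base to the full topology, by pairing each countable subfamily of $\Tee_X$ with a member of $\Cee_0$ that base-decomposes its every open set. Concretely, I would set
$$
\Cee = \{\Pee\in[\Tee_X]^{\leq\omega} : \Pee\cap\Bee\in\Cee_0 \text{ and every } U\in\Pee \text{ is a union of members of }\Pee\cap\Bee\}
$$
and show that $\Cee$ is a club in $[\Tee_X]^{\leq\omega}$ whose members all satisfy $\Pee\subset_!\Tee_X$.

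Next I would verify the two defining properties of a club. For closure under increasing $\omega$-chains, if $\Pee_1\subset\Pee_2\subset\cdots$ are all in $\Cee$, then $(\bigcup_n\Pee_n)\cap\Bee=\bigcup_n(\Pee_n\cap\Bee)$ lies in $\Cee_0$ by the $\omega$-chain closure of $\Cee_0$, while any $U\in\bigcup_n\Pee_n$ belongs to some $\Pee_n$ and hence is a union of elements of $\Pee_n\cap\Bee\subset(\bigcup_n\Pee_n)\cap\Bee$. For the catching axiom, given $B\in[\Tee_X]^{\leq\omega}$, I would choose for each $U\in B$ a countable $\beta_U\subset\Bee$ with $U=\bigcup\beta_U$ (using that $\Bee$ is a base), form $B^{*}=(B\cap\Bee)\cup\bigcup_{U\in B}\beta_U\in[\Bee]^{\leq\omega}$, apply the catching axiom of $\Cee_0$ to obtain $C\in\Cee_0$ with $B^{*}\subset C$, and set $\Pee=C\cup B$. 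Since $B\cap\Bee\subset B^{*}\subset C$, we get $\Pee\cap\Bee=C\in\Cee_0$; and each element of $\Pee$ is either in $C=\Pee\cap\Bee$ (trivially a union) or in $B$ (decomposed by $\beta_U\subset C$).

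Finally, to verify $\Pee\subset_!\Tee_X$ for every $\Pee\in\Cee$, let $\See\subset\Pee$ and $x\notin\cl_X\bigcup\See$. Using the second clause of the definition of $\Cee$, pick for each $U\in\See$ some $\beta_U'\subset\Pee\cap\Bee$ with $U=\bigcup\beta_U'$, and set $\See'=\bigcup_{U\in\See}\beta_U'\subset\Pee\cap\Bee$. Then $\bigcup\See'=\bigcup\See$, so $x\notin\cl_X\bigcup\See'$; applying $\Pee\cap\Bee\subset_!\Bee$ produces $W\in\Pee\cap\Bee\subset\Pee$ with $x\in W$ and $W\cap\bigcup\See=W\cap\bigcup\See'=\emptyset$, as wanted. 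The main delicate point I anticipate is the catching step: one must absorb the stray base elements $B\cap\Bee$ into $B^{*}$ before invoking the catching axiom for $\Cee_0$, so that $\Pee\cap\Bee$ equals $C$ exactly and lies in $\Cee_0$, rather than merely extending it.
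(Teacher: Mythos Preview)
Your argument has a genuine gap in the catching step. You write ``choose for each $U\in B$ a countable $\beta_U\subset\Bee$ with $U=\bigcup\beta_U$ (using that $\Bee$ is a base)'', but a base only guarantees that $U$ is a union of \emph{some} subfamily of $\Bee$, not a \emph{countable} one. In a non-second-countable space a single open set may require uncountably many basic sets to cover it exactly. Consequently you cannot in general produce any $\Pee\in\Cee$ containing a prescribed $B\in[\Tee_X]^{\leq\omega}$, because your definition of $\Cee$ forces every $U\in\Pee$ to be the union of the countable family $\Pee\cap\Bee$.

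The paper closes this gap with an extra step you omitted: from the club hypothesis one gets (via Proposition~\ref{club-strategy}) a strong winning strategy in $\Bee$, which in turn forces $X$ to satisfy the countable chain condition. With ccc in hand one does not try to write $U$ as a countable union of basic sets, but instead fixes for each $U\in\Tee_X\setminus\Bee$ a countable disjoint family $\Aaa_U\subset\Bee$ of subsets of $U$ with $\cl\bigcup\Aaa_U=\cl U$. The paper's club is then $\Cee'=\{A\cup\Qee:\Qee\in\Cee_0,\ A\in[\Tee_X]^{\leq\omega},\ \Aaa_U\subset\Qee\text{ for all }U\in A\}$, and in the $\subset_!$ verification one uses $\cl\bigcup\See=\cl\bigcup\See'$ rather than $\bigcup\See=\bigcup\See'$. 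Apart from this correction your overall structure---pairing a countable family of open sets with a club member that ``decomposes'' it---matches the paper's; the missing ingredient is precisely the ccc argument that makes a countable decomposition (up to closure) available.
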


\begin{proof}
If there exists  a base $\Bee$ of  $X$ such that  the family
$\{\Pee\in [\Bee]^{\leq \omega}: \Pee\subset_! \Bee\}$ contains a
club,  then there exists a strong winning strategy in $\Bee$.
Therefore, player I has winning strategy in the open-open game
$G(\Bee)$ (i.e., the open-open game when each player chooses a set
from $\Bee$). This implies that $X$ satisfies the countable chain
condition, otherwise the strategy for player II to choose at each
stage a nonempty subset of a member of a fixed uncountable maximal
disjoint collection of elements of $\Bee$ is winning (see
\cite[Theorem 1.1(ii)]{dkz} for a similar situation). Consequently,
every nonempty open subset $G\subset X$ contains a countable
disjoint open family whose union is dense in $G$ (just take a
maximal disjoint open family in $G$). Now, for each element
$U\in\Tee_X\setminus \Bee$ we assign a countable family
$\Aaa_U\subset\Bee$ of pairwise disjoint open subsets of $U$ such
that $\cl\bigcup\Aaa_U=\cl U$. If $U\in \Bee$, then we assign
$\Aaa_U=\{U\}$. Let $\Cee\subset\{\Pee\in [\Bee]^{\leq \omega}:
\Pee\subset_! \Bee\}$ be a club. Put
$$\Cee'=\{ A \cup \Qee: \Qee\in\Cee{~}\mbox{and}{~}
A\in[\Tee_X]^{\leq\omega}{~}\mbox{with}
{~}\Aaa_U\subset\Qee{~}\mbox{for all}{~}U\in A\}.$$ \indent
First, observe that if $A\cup\Qee_A\subset
D\cup\Qee_D$ and $A\cup\Qee_A, D\cup\Qee_D\in\Cee'$, then
$\Qee_A\subset\Qee_D$. Indeed, if $U\in \Qee_A\subset \Bee$ then $U\in
D\cup\Qee_D$ and $U\in \Bee$. If $U\in D$, then  we
get $\{U\}=\Aaa_U\subset \Qee_D$ (i.e. $U\in \Qee_D$).
Therefore, if we have a chain
$\{A_n\cup\Qee_{A_n}:n\in\omega\}\subset\Cee'$, then
\[ \bigcup \{A_n\cup\Qee_{A_n}:n\in\omega\}=\bigcup_{n\in\omega}
A_n\cup\bigcup_{n\in\omega}\Qee_{A_n}\in\Cee'\]

The absorbing property (i.e. for every $A\in[\Tee_X]^{\leq \omega}$
there is an element $\Pee\in\Cee'$ such that $A\subset \Pee$) for
$\Cee'$ is obvious. So,
$\Cee'\subset [\Tee_X]^{\leq \omega}$ is a club.

It remains to prove that
$A\cup\Qee\subset_!\Tee_X$ for every $A\cup\Qee\in\Cee'$. Fix a
subfamily $\See\subset A\cup\Qee$ and $x\not\in\cl\bigcup\See$.
Define $$\See'=\{U\in\See:U\in\Qee\}\cup\bigcup\{\Aaa_U:U\in A\}$$ and
note that $\cl\bigcup\See=\cl\bigcup\See'$. The last equality
follows from the inclusion $\bigcup\See'\subset \bigcup\See$ and the
fact that $\bigcup\Aaa_U$ is dense in $U$ for every $U\in A$. So, if
$x\not\in\cl\bigcup\See$ then $x\not\in\cl\bigcup\See'$. Since
$\See'\subset\Qee\in\Cee$ there is $G\in\Qee$ such that $x\in G$ and
$G\cap \cl\bigcup \See'=\emptyset$
\end{proof}
If $X$ is a completely regular space, then $\Sigma_X$ denotes the collection
of all co-zero sets in $X$.
\begin{cor}\label{2.8}
Let $X$ be a completely regular space and $\Bee\subset \Sigma_X $
a base for  $X$. If $\{\Pee\in
[\Bee]^{\leq \omega}: \Pee\subset_! \Bee\}$ contains a club,  then
the family $\{\Pee\in [\Sigma_X]^{\leq \omega}: \Pee\subset_!
\Sigma_X\}$ contains  a club too.
\end{cor}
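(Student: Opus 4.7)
The plan is to mimic the proof of Proposition~\ref{2.7} essentially verbatim, substituting $\Sigma_X$ for $\Tee_X$ throughout and verifying that every auxiliary family stays inside the cozero sets. Since $\Bee\subset\Sigma_X$ is a base and $\{\Pee\in[\Bee]^{\leq\omega}:\Pee\subset_!\Bee\}$ contains a club, Proposition~\ref{club-strategy} supplies a strong winning strategy in $\Bee$, and this strategy is also a winning strategy for player I in the open-open game $G(\Bee)$. Exactly as in Proposition~\ref{2.7}, this forces $X$ to be ccc (else an uncountable maximal disjoint subfamily of $\Bee$ gives player II a winning strategy in $G(\Bee)$). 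Hence every nonempty open, and in particular every cozero, set $U\subset X$ admits a countable pairwise disjoint family $\Aaa_U\subset\Bee$ with $\cl\bigcup\Aaa_U=\cl U$; for $U\in\Bee$ take $\Aaa_U=\{U\}$.

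Fix a club $\Cee\subset\{\Pee\in[\Bee]^{\leq\omega}:\Pee\subset_!\Bee\}$ and imitate the construction of $\Cee'$ from Proposition~\ref{2.7}, now with $\Sigma_X$ in place of $\Tee_X$:
$$\Cee'=\{A\cup\Qee:\Qee\in\Cee,\ A\in[\Sigma_X]^{\leq\omega},\ \Aaa_U\subset\Qee\text{ for every } U\in A\}.$$
Each member of $\Cee'$ lies in $[\Sigma_X]^{\leq\omega}$ because $\Bee\subset\Sigma_X$. The verification that $\Cee'$ is closed under increasing $\omega$-chains and absorbs every countable subfamily of $\Sigma_X$ goes through word for word as in Proposition~\ref{2.7}: the key observation is again that $A\cup\Qee_A\subset D\cup\Qee_D$ with both in $\Cee'$ forces $\Qee_A\subset\Qee_D$, so taking unions along an $\omega$-chain preserves the defining requirement $\Aaa_U\subset\Qee$.

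It remains to check that $A\cup\Qee\subset_!\Sigma_X$ whenever $A\cup\Qee\in\Cee'$. Given $\See\subset A\cup\Qee$ and $x\notin\cl\bigcup\See$, form
$$\See'=(\See\cap\Qee)\cup\bigcup\{\Aaa_U:U\in\See\cap A\}\subset\Qee.$$
Since $\bigcup\Aaa_U$ is dense in $U$ for each $U\in A$, one has $\cl\bigcup\See=\cl\bigcup\See'$, so $x\notin\cl\bigcup\See'$. Applying $\Qee\subset_!\Bee$ produces $G\in\Qee\subset\Sigma_X$ with $x\in G$ and $G\cap\bigcup\See'=\emptyset$. For $U\in\See\cap\Qee$ this gives $G\cap U=\emptyset$ immediately, while for $U\in\See\cap A$ openness of $G$ propagates $G\cap\bigcup\Aaa_U=\emptyset$ to $G\cap\cl\bigcup\Aaa_U=G\cap\cl U=\emptyset$, hence $G\cap U=\emptyset$. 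Consequently $G\cap\bigcup\See=\emptyset$, witnessing the $\subset_!$ condition inside $\Sigma_X$.

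There is no real obstacle here — the argument is essentially a bookkeeping exercise — but the one point that must be checked carefully is that the witness $G$ supplied by $\Qee\subset_!\Bee$ belongs to $\Sigma_X$; this is automatic from $\Bee\subset\Sigma_X$, which is the only place the cozero hypothesis enters in an essential way. The ccc consequence of the strong winning strategy is what enables the $\Aaa_U$ to be chosen countable and inside $\Bee$, and therefore inside $\Sigma_X$.
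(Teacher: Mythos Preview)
Your proof is correct and follows essentially the same route as the paper's: the paper simply states that the argument of Proposition~\ref{2.7} carries over with $\Sigma_X$ in place of $\Tee_X$, the one modification being that the families $\Aaa_U$ are chosen inside $\Bee\subset\Sigma_X$, and then writes down the same club $\Cee'$ that you do. Your write-up is in fact more explicit than the paper's in verifying that the witness $G$ disjoint from $\bigcup\See'$ is also disjoint from $\bigcup\See$, and your definition of $\See'$ (restricting to $U\in\See\cap A$ rather than all $U\in A$) is the correct one.
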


\begin{proof}
The proof of previous proposition works in the present situation.
The only modification is that for each $U\in\Sigma_X\setminus \Bee$
we assign a countable family $\Aaa_U\subset\Bee$ of pairwise
disjoint co-zero subsets of $U$ such that $\cl\bigcup\Aaa_U=\cl U$.
Such $\Aaa_U$ exists. For example, any maximal disjoint family of
elements from $\Bee$ which are contained in $U$ can serve as
$\Aaa_U$. The new club is the family $$\Cee'=\{A\cup\Qee :
\Qee\in\Cee{~}\mbox{and}{~}
A\in[\Sigma_X]^{\leq\omega}{~}\mbox{with}
{~}\Aaa_U\subset\Qee{~}\mbox{for all}{~}U\in A\},$$
where $\Cee \subset \{\Pee\in
[\Bee]^{\leq \omega}: \Pee\subset_! \Bee\}$ is a club.

\end{proof}

\section{Inverse systems with d-open bounding maps}

Recall some facts from \cite{kp8}. Let $\Pee$ be an open family in a
topological space $X$ and $x,y\in X$. We say that $x\sim_{\Pee} y$
if and only if $x\in V\Leftrightarrow y\in V$ for every $V\in\Pee.$
The family of all sets $[x]_{\Pee}=\{y:y\sim_{\Pee}x\}$ is denoted
by $X/\Pee$. There exists a mapping $q:X\to X/\Pee$ defined by
$q[x]=[x]_{\Pee}$. The set $X/\Pee$ is equipped with the topology
$\Tee_{\Pee}$ generated by all images $q(V)$, $V\in\Pee$.

\begin{lem}\cite[Lemma 1]{kp8}\label{quotion}
The mapping $q:X\to X/\Pee$ is continuous provided $\Pee$ is an open
family $X$ which is closed under finite intersection. Moreover, if
$X=\bigcup \Pee$, then the family $\{q(V): V\in \Pee\}$ is a base
for the topology $\Tee_{\Pee}$. \hfill $\Box$
\end{lem}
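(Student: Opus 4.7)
The plan is to reduce both parts of the lemma to a single saturation identity: $q^{-1}(q(V)) = V$ for each $V \in \Pee$. To verify this, the inclusion $V \subset q^{-1}(q(V))$ is immediate, and conversely if $x \in q^{-1}(q(V))$ then $[x]_{\Pee} = [y]_{\Pee}$ for some $y \in V$, i.e.\ $x \sim_{\Pee} y$; applying the defining biconditional of $\sim_{\Pee}$ to the set $V \in \Pee$ forces $x \in V$. Note that this step uses only that $V$ belongs to $\Pee$, not that $\Pee$ is closed under intersection.

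With the saturation identity in hand, continuity of $q$ is immediate. The topology $\Tee_{\Pee}$ is generated as a subbase by $\{q(V) : V \in \Pee\}$, and for any finite subcollection $V_1,\dots,V_n\in\Pee$,
\[
q^{-1}\bigl(q(V_1)\cap\cdots\cap q(V_n)\bigr)=V_1\cap\cdots\cap V_n,
\]
which is open in $X$ as a finite intersection of open sets. Since preimages commute with unions, every member of $\Tee_{\Pee}$ has open preimage, so $q$ is continuous.

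For the ``moreover'' assertion I would combine closure of $\Pee$ under finite intersections with the saturation identity to obtain $q(V_1)\cap q(V_2) = q(V_1\cap V_2)$ for all $V_1,V_2\in\Pee$. Indeed, $[x]\in q(V_1)\cap q(V_2)$ iff $x\in V_1\cap V_2$ (using $q^{-1}(q(V_i))=V_i$) iff $[x]\in q(V_1\cap V_2)$. Thus $\{q(V):V\in\Pee\}$ is itself closed under finite intersections, and the hypothesis $X=\bigcup\Pee$ ensures it covers $X/\Pee$. A subbase that is closed under finite intersections and covers the whole space is a base, which concludes the argument.

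I do not expect a genuine obstacle; the only thing to keep straight is the distinction between $\Pee$, viewed as a family of open sets in $X$, and its forward image under $q$, viewed as a family of subsets of $X/\Pee$. The identity $q^{-1}(q(V))=V$ is the single technical ingredient that bridges the two and drives both halves of the lemma.
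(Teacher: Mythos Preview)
Your argument is correct, and the saturation identity $q^{-1}(q(V))=V$ for $V\in\Pee$ is exactly the right pivot. The paper does not supply its own proof of this lemma---it is simply quoted from \cite{kp8} and marked with a $\Box$---but the same identity is invoked explicitly later in the proof of Theorem~\ref{represent} (with a citation to \cite{kp8}), so your approach is fully aligned with what the paper relies on.
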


\begin{lem}\label{d-open2}
Let a space $X$ be the limit of a inverse system $\{ X_\sigma,
\pi^\sigma_\varrho,\Sigma\}$ with surjective bonding maps
$\pi^\sigma_\varrho$. Then $\pi^\sigma_\varrho$ are d-open if and
only if each projection $p_\sigma:X\to X_\sigma$ is d-open.
\end{lem}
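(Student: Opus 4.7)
The plan is to reduce both directions to Proposition \ref{d-open} and Lemma \ref{d-open1}, using the standard fact that in an inverse system with surjective bonding maps over a directed set $\Sigma$ the projections $p_\sigma:X\to X_\sigma$ are surjective, and that the sets of the form $p_\tau^{-1}(W)$, with $\tau\in\Sigma$ and $W\subset X_\tau$ open, form a base of the topology of $X$.

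For the forward implication, fix $\sigma\in\Sigma$ and verify condition $(3)$ of Proposition \ref{d-open} for $p_\sigma$, namely $p_\sigma(U)\subset\int_{X_\sigma}\cl_{X_\sigma}p_\sigma(U)$ for every open $U\subset X$. Since the inclusion is preserved under unions, it suffices to handle a basic open set $U=p_\tau^{-1}(W)$. By directedness of $\Sigma$, $U$ can be rewritten as $p_\rho^{-1}((\pi^\rho_\tau)^{-1}(W))$ for some $\rho\geq\sigma,\tau$, so we may assume $\tau\geq\sigma$. Surjectivity of $p_\tau$ gives $p_\sigma(U)=\pi^\tau_\sigma(W)$, and d-openness of the bonding map $\pi^\tau_\sigma$ together with Proposition \ref{d-open}(3) yields $\pi^\tau_\sigma(W)\subset\int_{X_\sigma}\cl_{X_\sigma}\pi^\tau_\sigma(W)$, as required.

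For the backward implication, fix $\varrho\leq\sigma$ in $\Sigma$. The inverse-limit compatibility gives $p_\varrho=\pi^\sigma_\varrho\circ p_\sigma$, which is d-open by hypothesis; since $p_\sigma$ is continuous and surjective, Lemma \ref{d-open1} applies directly and yields the d-openness of $\pi^\sigma_\varrho$.

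The main delicate point is not the argument itself but rather the surjectivity of the projections $p_\sigma$, which is used in both directions (to compute $p_\sigma(p_\tau^{-1}(W))=\pi^\tau_\sigma(W)$ and to invoke Lemma \ref{d-open1}). This is a standard consequence of the hypotheses on the inverse systems considered in the paper; once it is in hand, both implications follow as a direct application of Proposition \ref{d-open} and Lemma \ref{d-open1}.
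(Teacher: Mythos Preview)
Your argument is correct and matches the paper's approach: for the forward implication you both reduce to basic open sets $p_\tau^{-1}(W)$ with $\tau\geq\sigma$, identify $p_\sigma(p_\tau^{-1}(W))$ with $\pi^\tau_\sigma(W)$ via surjectivity of $p_\tau$, and invoke d-openness of the bonding map, while the converse is Lemma~\ref{d-open1} in both proofs. The paper phrases the forward step via the nearly-open definition rather than Proposition~\ref{d-open}(3), and it uses the surjectivity of the limit projections tacitly; your explicit flagging of that point is well taken.
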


\begin{proof}
Assume all $\pi^\sigma_\varrho$ are d-open. It suffices to show that
$p_\rho\big((p_\sigma)^{-1}(U)\big)$ is dense in some open subset of
$X_\rho$ for any open $U\subset X_\sigma$, where $\sigma\geq \rho$.
Since $p^\sigma_\rho$ is d-open and
$p_\rho\big((p_\sigma)^{-1}(U)\big)=p^\sigma_\rho(U)$, the proof is
completed. Conversely, if the limit projections are d-open, then, by
Lemma \ref{d-open1}, the bonding maps are also d-open.
\end{proof}

We say that a space $X$ is \textit{an almost limit} of the inverse
system $\displaystyle S=\{X_\sigma, \pi^{\sigma}_\varrho, \Gamma\}$,
if $X$ can be embedded in $\displaystyle\underleftarrow{\lim}S$ such
that $\pi_\sigma(X)=X_\sigma$ for each $\sigma\in\Gamma$. We denote
this by $X=\displaystyle\mathrm{a}-\underleftarrow{\lim}S$, and it
implies that $X$ is a dense subset of
$\displaystyle\underleftarrow{\lim} S$.

\begin{thm}\label{represent}
A Hausdorff space $X$ is very $\mathrm I$-favorable if and only if
$X=\displaystyle\mathrm{a}-\underleftarrow{\lim}S$, where $S=\{X_A,
q^A_B,\Cee\}$  is a $\sigma$-complete inverse system such that all
$X_A$ are $($not-necessarily Hausdorff$)$ spaces with countable
weight and the bonding maps $q^A_B$ are d-open and onto.
\end{thm}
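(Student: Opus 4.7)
The plan is to mirror the skeletal-vs.-I-favorable framework of \cite{kp8} in the d-open-vs.-very-I-favorable setting, substituting Corollary \ref{2.6} for the $\subset_c$-club characterisation used there.

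For necessity, fix a club $\Cee\subset [\Tee_X]^{\leq\omega}$ whose elements satisfy $A\subset_!\Tee_X$ together with (i)--(iii) of Corollary \ref{2.6}. For each $A\in\Cee$ form the quotient $X_A=X/A$ with projection $q_A\colon X\to X_A$ via Lemma \ref{quotion}; then $\{q_A(V):V\in A\}$ is a countable base of $X_A$. Since each $V\in A$ is $\sim_A$-saturated, $q_A^{-1}(q_A(V))=V$, so the family of preimages of this base coincides with $A$, which is $\subset_!\Tee_X$. Remark \ref{2.2} then makes $q_A$ d-open. For $A\subset B$ in $\Cee$, the factoring $q_A=q^A_B\circ q_B$ with $q_B$ surjective together with Lemma \ref{d-open1} forces $q^A_B$ d-open and onto. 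For an increasing chain $A_n\uparrow A$ in $\Cee$, comparing the canonical bases shows $X_A=\underleftarrow{\lim} X_{A_n}$, so $S=\{X_A,q^A_B,\Cee\}$ is $\sigma$-complete. The diagonal map $\phi(x)=([x]_A)_{A\in\Cee}$ is injective by Corollary \ref{2.6}(ii), has dense image because each $q_A$ is surjective, and is open onto its image since Corollary \ref{2.6}(iii) places every $U\in\Tee_X$ in some $A\in\Cee$, whence $\phi(U)=\phi(X)\cap p_A^{-1}(q_A(U))$. Thus $X=\mathrm{a}-\underleftarrow{\lim}S$.

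For sufficiency, assume $X=\mathrm{a}-\underleftarrow{\lim}S$ as described. Lemma \ref{d-open2} gives each projection $p_A\colon\underleftarrow{\lim}S\to X_A$ d-open. Density of $X$ in $\underleftarrow{\lim}S$ together with $p_A(X)=X_A$ make the restriction $\tilde p_A:=p_A|X$ a d-open surjection: for open $W\subset\underleftarrow{\lim}S$, density of $X$ makes $W\cap X$ dense in $W$, hence $\cl_{X_A}\tilde p_A(W\cap X)=\cl_{X_A}p_A(W)$, and Proposition \ref{d-open}(3) applied to $p_A$ yields $\tilde p_A(W\cap X)\subset\int_{X_A}\cl_{X_A}\tilde p_A(W\cap X)$. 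Fix a countable base $\Bee_A$ of $X_A$ for every $A\in\Cee$. Repeating the implication $(2)\Rightarrow (4)$ of Proposition \ref{d-open} but picking the witnessing $V'$ inside $\Bee_A$ shows that $\Pee_A:=\tilde p_A^{-1}(\Bee_A)$ is a countable subfamily of $\Tee_X$ with $\Pee_A\subset_!\Tee_X$.

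I would then take the target club $\Cee'$ to consist of all countable families $\tilde p_A^{-1}(\Bee)$ with $A\in\Cee$ and $\Bee$ a countable base of $X_A$. Absorption of any $\Pee_0\in[\Tee_X]^{\leq\omega}$ follows from the standard base of $\underleftarrow{\lim}S$: members of $\Pee_0$ depend on only countably many coordinates of $\Cee$, which are joined into a single $A\in\Cee$ using $\sigma$-completeness of $\Cee$. Closure under an increasing $\omega$-chain $A_n\uparrow A$ rests on the identity $X_A=\underleftarrow{\lim} X_{A_n}$ coming from $\sigma$-completeness of $S$; this permits arranging the bases $\Bee_{A_n}$ inductively so that $\bigcup_n(q^{A_n}_A)^{-1}(\Bee_{A_n})$ is a base of $X_A$, whence $\bigcup_n\Pee_{A_n}\in\Cee'$. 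Very I-favorability then follows from Proposition \ref{2.7}. The main obstacle I anticipate is precisely this compatible choice of bases $\Bee_A$ along the directed system $\Cee$: one must refine them by transfinite induction so that the $\omega$-chain-closure of $\Cee'$ holds verbatim, after which the rest is routine bookkeeping.
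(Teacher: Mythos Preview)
Your argument follows the same architecture as the paper's: quotients by club members for necessity, pulled-back bases assembled into a club for sufficiency, then Proposition~\ref{2.7}.

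Two comments. First, in the absorption step you write $\Pee_0\in[\Tee_X]^{\leq\omega}$, but an arbitrary open subset of $X$ need not factor through any single $X_A$ (it is a union of basic sets that may involve uncountably many indices). You mean $\Pee_0\in[\Bee]^{\leq\omega}$ for the base $\Bee=\bigcup_{A\in\Cee}\Pee_A$; this is what Proposition~\ref{2.7} actually needs, and your invoking it shows you intend this.

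Second, the ``obstacle'' you flag---arranging the bases $\Bee_A$ compatibly so that the club is closed under $\omega$-chains---is real, and in fact the paper glosses over precisely this point: it asserts that $\sigma$-completeness of the index set yields chain-closure of $\{\Pee_A:A\in\Cee\}$, which tacitly presupposes that $\Bee_A$ absorbs $(q^A_B)^{-1}(\Bee_B)$ for $B\leq A$. Your transfinite-induction fix is viable, but a lighter one avoids coherent choices altogether: let the club consist of all families $\bigcup_n\tilde p_{A_n}^{-1}(\Bee_{A_n})$ with $(A_n)$ an increasing $\omega$-chain in $\Cee$ and $\Bee_{A_n}$ any countable base of $X_{A_n}$. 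For $A=\sup A_n$, $\sigma$-completeness gives $X_A=\underleftarrow{\lim}X_{A_n}$, so this union is exactly $\tilde p_A^{-1}$ of a countable base of $X_A$, hence $\subset_!\Tee_X$ by Proposition~\ref{d-open}. Chain-closure is then immediate by diagonalizing a chain of chains, and cofinality in $[\Bee]^{\leq\omega}$ uses only directedness and $\sigma$-completeness of $\Cee$.
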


\begin{proof}
Suppose $(X,\Tee)$ is very I-favorable. By Corollary \ref{2.6},
there exist a club $\Cee\subset\{\Pee\in [\Tee]^{\leq \omega}:
\Pee\subset_! \Tee\}$ satisfying conditions (i)-(iii).  For every
$A\in\Cee$ consider the space $X_A=X/A$ and the map $q_A\colon X\to
X_A$. Since each $A$ is a cover of $X$ closed under finite
intersections, by Lemma \ref{quotion}, $q_A$ is a continuous
surjection and $\{q_A(U):U\in A\}$ is a countable base for $X_A$.
Moreover, $q_A^{-1}(q_A(U))=U$ for all $U\in A$, see \cite{kp8}.
This, according to Remark \ref{2.2}, implies that each $q_A$ is
d-open (recall that $A\subset_!\Tee)$). If $A,B\in\Cee$ with
$B\subset A$, then there exists a map $q^A_B\colon X_A\to X_B$ which
is continuous because $(q^A_B)^{-1}(q_B(U))=q_A(U)$ for every $U\in
B$. The maps $q^A_B$ are also d-open, see Lemma \ref{d-open2}. In
this way we obtained the inverse system $S=\{X_A, q^A_B,\Cee\}$
consisting of spaces with countable weight and d-open bonding maps.
Since $\Cee$ is closed under increasing chains, $S$ is
$\sigma$-complete. It remains to show that the map $h\colon
X\to\underleftarrow{\lim}S$, $h(x)=(q_A(x))_{A\in\Cee}$, is a dense
embedding. Let $\pi_A\colon\underleftarrow{\lim}S\to X_A$,
$A\in\Cee$, be the limit projections of $S$. The  family
$\{\pi_A^{-1}(q_A(U)):U\in A, A\in\Cee\}$ is a base for the topology
of $\underleftarrow{\lim}S$. Since
$h^{-1}\big(\pi_A^{-1}(q_A(U))\big)=U$ for any $U\in A\in\Cee$, $h$
is continuous and $h(X)$ is dense in $\underleftarrow{\lim}S$.
Because $\Cee$ satisfies condition (ii) (see Corollary \ref{2.6}),
$h$ is one-to-one. Finally, since $h(U)=h(X)\cap \pi_A^{-1}(q_A(U))$
for any $U\in A\in\Cee$ (see \cite[the proof of Theorem 11]{kp8} and
$\Cee$ contains a base for $\Tee$, $h$ is an embedding.

Suppose now that $X=\displaystyle\mathrm{a}-\underleftarrow{\lim}S$,
where $S=\{X_A, q^A_B,\Cee\}$  is a $\sigma$-complete inverse system
such that all $X_A$ are spaces with countable weight and the bonding
maps $q^A_B$ are d-open and onto. Then, by Lemma \ref{d-open2}, all
limit projections $\pi_A\colon\underleftarrow{\lim}S\to X_A$,
$A\in\Cee$, are d-open. Since $X$ is dense in
$\underleftarrow{\lim}S$, any restriction $q_A=\pi_A|X\colon X\to
X_A$ is also d-open. Moreover, all $q_A$ are surjective (see the
definition of $\displaystyle\mathrm{a}-\underleftarrow{\lim}$).
Then, according to Proposition \ref{d-open},
$\{q_A^{-1}(U):U\in\Tee_A\}\subset_!\Tee$, where $\Tee_A$ is the
topology of $X_A$. Consequently, if $\Bee_A$ is a countable base for
$\Tee_A$, we have $\Pee_A=\{q_A^{-1}(U):U\in\Bee_A\}\subset_!\Tee$.
The last relation implies $\Pee_A\subset_!\Bee$ with
$\Bee=\bigcup\{\Pee_A:A\in\Cee\}$ being a base for $\Tee$. Let us
show that $\Pee=\{\Pee_A:A\in\Cee\}$ is a club in $\{\Qee\in
[\Bee]^{\leq\omega}:\Qee\subset_!\Bee\}$. Since $S$ is
$\sigma$-complete, the supremum of any increasing sequence from
$\Cee$ is again in $\Cee$. This implies that $\Pee$ is closed under
increasing chains.  So, it remains to prove that for every countable
family $\{U_j:j=1,2,..\}\subset\Bee$ there exists $A\in\Cee$ with
$U_j\in\Pee_A$ for all $j\geq 1$. Because every $U_j$ is of the form
$q_{A_j}^{-1}(V_j)$ for some $A_j\in\Cee$ and $V_j\in\Bee_{A_j}$,
there exists $A\in\Cee$ with $A>A_j$ for each $j$. It is easily seen
that $\Pee_A$ contains the family $\{U_j:j\geq 1\}$ for any such
$A$. Therefore, $\Pee$ is a club in $\{\Qee\in
[\Bee]^{\leq\omega}:\Qee\subset_!\Bee\}$. Finally, according to
Proposition \ref{2.7}, the family $\{\Qee\in
[\Tee]^{\leq\omega}:\Qee\subset_!\Tee\}$ also contains a club.
Hence, $X$ is very I-favorable.
\end{proof}

It follows from Theorem \ref{represent} that every dense subset of a
space from each of the following classes is very I-favorable:
products of first
 countable spaces, $\kappa$-metrizable compacta. More generally, by
\cite[Theorem 2.1 (iv)]{va1}, every space with a lattice of d-open
maps is very I-favorable.

The next theorem provides another examples of very I-favorable
spaces.

\begin{thm}\label{k-adic}
Let $f:X\xrightarrow{onto}Y$ be a perfect map with $X,Y$ being
regular spaces. Then $Y$ is very $\mathrm I$-favorable, provided so
is $X$.
\end{thm}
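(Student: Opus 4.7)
The plan is to use Corollary~\ref{2.6} to transfer the club description of very $\mathrm I$-favorability from $X$ to $Y$ via the operation $f^{\#}(G)=Y\setminus f(X\setminus G)$. Since $f$ is closed (being perfect), $f^{\#}(G)\in\Tee_Y$ for every $G\in\Tee_X$, and one has the identities $f^{\#}(f^{-1}(V))=V$ (using surjectivity) and $f^{-1}(f^{\#}(G))\subset G$, which will be used repeatedly.

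Take a club $\Cee_X\subset[\Tee_X]^{\leq\omega}$ of $\subset_{!}\Tee_X$-families from Corollary~\ref{2.6}. I first refine $\Cee_X$ to a sub-club $\Cee_X'$ whose members are additionally closed under finite unions and under the \emph{saturated core} operation $U\mapsto f^{-1}(f^{\#}(U))$. Starting from any $\Pee\in\Cee_X$, alternate applying the two operations (each preserves countability) with absorbing back into $\Cee_X$; the union after $\omega$ rounds lies in $\Cee_X$ by $\sigma$-completeness and is closed under both operations.

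For $\Pee\in\Cee_X'$ set $\Pee^{\#}=\{f^{\#}(U):U\in\Pee\}\setminus\{\varnothing\}\subset\Tee_Y$. The crucial step is to show $\Pee^{\#}\subset_{!}\Tee_Y$. Fix $\See\subset\Pee^{\#}$ and $y\notin\cl_Y\bigcup\See$. Writing each $V\in\See$ as $V=f^{\#}(U_V)$ with $U_V\in\Pee$, the saturated core $\widehat U_V=f^{-1}(f^{\#}(U_V))=f^{-1}(V)$ lies in $\Pee$ by construction of $\Cee_X'$, so $\bigcup_V\widehat U_V=f^{-1}(\bigcup\See)$. Continuity of $f$ yields
\[
f^{-1}(y)\cap\cl_X\textstyle\bigcup_V\widehat U_V\subset f^{-1}(y)\cap f^{-1}(\cl_Y\bigcup\See)=\varnothing,
\]
so applying $\Pee\subset_{!}\Tee_X$ to $\{\widehat U_V\}_V\subset\Pee$ gives, for each $x\in f^{-1}(y)$, some $U_x\in\Pee$ with $x\in U_x$ and $U_x\cap\bigcup_V\widehat U_V=\varnothing$. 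Compactness of $f^{-1}(y)$ (as $f$ is perfect) yields a finite subcover whose union $G\in\Pee$ (finite unions are closed) satisfies $f^{-1}(y)\subset G$ and $G\cap\bigcup_V\widehat U_V=\varnothing$. Then $W=f^{\#}(G)\in\Pee^{\#}$ contains $y$, and any $z\in W\cap f^{\#}(U_V)$ would force the nonempty $f^{-1}(z)$ into $G\cap\widehat U_V=\varnothing$, impossible; thus $W\cap\bigcup\See=\varnothing$, as required for $\subset_{!}\Tee_Y$.

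Finally, $\Cee_Y=\{\Pee^{\#}:\Pee\in\Cee_X'\}$ yields a club in $[\Tee_Y]^{\leq\omega}$ of $\subset_{!}\Tee_Y$-families: absorbing holds because for any countable $D\subset\Tee_Y$ one picks $\Pee\in\Cee_X'$ containing $\{f^{-1}(V):V\in D\}$, whence $D\subset\Pee^{\#}$ via $V=f^{\#}(f^{-1}(V))$; closure under increasing $\omega$-chains follows from the identity $(\bigcup_n\Pee_n)^{\#}=\bigcup_n\Pee_n^{\#}$ and $\sigma$-completeness of $\Cee_X'$, after first replacing an arbitrary increasing chain in $\Cee_Y$ by an increasing chain of pre-images in $\Cee_X'$ by absorbing. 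Corollary~\ref{2.6} then concludes that $Y$ is very $\mathrm I$-favorable. The principal delicate point is the construction of $\Cee_X'$ via the iterated absorbing/saturated-core procedure so that the saturated cores of the chosen base elements remain inside $\Pee$; everything else is routine.
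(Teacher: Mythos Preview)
The paper does not supply an independent argument here; it simply records that the proof of Daniels--Kunen--Zhou \cite{dkz} for compact spaces carries over unchanged to perfect maps between regular spaces. Your approach via the small-image operator $f^{\#}(G)=Y\setminus f(X\setminus G)$ and compactness of the fibres is exactly the natural adaptation of that argument, and the heart of it --- the verification that $\Pee^{\#}\subset_{!}\Tee_Y$ whenever $\Pee\in\Cee_X'$ --- is carried out correctly.

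There is, however, a genuine gap in the last paragraph. Your argument that $\Cee_Y=\{\Pee^{\#}:\Pee\in\Cee_X'\}$ is closed under increasing $\omega$-chains does not work as written. Given an increasing chain $Q_1\subset Q_2\subset\cdots$ in $\Cee_Y$ with $Q_n=\Pee_n^{\#}$, the families $\Pee_n$ need not be comparable, and ``absorbing'' them into an increasing chain $\Raa_1\subset\Raa_2\subset\cdots$ in $\Cee_X'$ yields only
\[
\textstyle\bigcup_n Q_n\ \subset\ \bigl(\bigcup_n\Raa_n\bigr)^{\#}\in\Cee_Y,
\]
not equality. You have thus reproved cofinality, not chain-closure, and the definition of a club in this paper requires the union itself to lie in $\Cee_Y$. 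One way to repair this is to observe that in fact $\Pee^{\#}=\{V\in\Tee_Y\setminus\{\varnothing\}:f^{-1}(V)\in\Pee\}$, so $\Pee^{\#}$ is determined by the \emph{saturated} part of $\Pee$; if you arrange in the construction of $\Cee_X'$ that each $\Pee$ is canonically generated from a prescribed countable family of saturated sets (by a fixed interleaving of finite unions, finite intersections, the strategy $\sigma$, and the saturated-core operator), then $Q_n\subset Q_{n+1}$ forces inclusion of the generating families, and chain-closure transfers. Alternatively, one can bypass the club formulation entirely and transfer the strong winning strategy directly, as in \cite{dkz}.
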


\begin{proof}
This theorem was established in \cite{dkz} when $X$ and $Y$ are
compact. The same proof works in our more general situation.
\end{proof}

\begin{cor}\label{k-dic}
Every continuous image under a perfect map of a space possessing a
lattice of d-open maps is very I-favorable. \hfill $\Box$
\end{cor}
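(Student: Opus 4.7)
The plan is to derive this corollary by directly composing two results already available. First, the remark immediately following Theorem \ref{represent} observes, via \cite[Theorem 2.1 (iv)]{va1}, that any space $X$ carrying a lattice of d-open maps is itself very $\mathrm I$-favorable. Thus, the hypothesis on the domain $X$ immediately upgrades from the structural assumption (existence of a lattice of d-open maps) to the game-theoretic one (very $\mathrm I$-favorability).

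Next, I would feed this into Theorem \ref{k-adic}: since $f\colon X \to Y$ is a perfect surjection and both $X$ and $Y$ are regular (the domain being regular as part of what "lattice of d-open maps" supplies, and $Y$ inheriting regularity from $X$ under the perfect surjection $f$), Theorem \ref{k-adic} applies verbatim and yields that $Y$ is very $\mathrm I$-favorable. The two citations together produce the corollary in essentially one line.

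The only point that needs a brief sanity check is the regularity hypothesis required by Theorem \ref{k-adic}. Spaces admitting a lattice of d-open maps are by construction Tychonoff (each d-open map factors through a metric or at least a well-separated quotient), so $X$ is regular. For $Y$, regularity is preserved by closed continuous surjections with compact fibers, which is exactly what "perfect" means in this context. Thus the ambient separation assumption of Theorem \ref{k-adic} is met automatically and no additional work is required. The proof is therefore essentially a two-step citation, and I would write it as such:

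\begin{proof}
By \cite[Theorem 2.1 (iv)]{va1}, every space possessing a lattice of d-open maps is very $\mathrm I$-favorable, so $X$ is very $\mathrm I$-favorable. Since $X$ and $Y$ are regular and $f\colon X\to Y$ is a perfect surjection, Theorem \ref{k-adic} yields that $Y$ is very $\mathrm I$-favorable.
\end{proof}

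Since there is no substantive obstacle beyond verifying the input hypotheses of the two invoked results, I would not attempt to reprove either ingredient; the value of the corollary lies precisely in packaging them together to produce a concrete new class of examples (continuous perfect images of products of metric spaces, of $\kappa$-metrizable compacta, etc.).
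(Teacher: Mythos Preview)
Your proposal is correct and matches the paper's intended argument exactly: the paper gives no proof at all (just the $\Box$), treating the corollary as immediate from the remark after Theorem~\ref{represent} (spaces with a lattice of d-open maps are very $\mathrm I$-favorable, via \cite[Theorem 2.1(iv)]{va1}) together with Theorem~\ref{k-adic}. Your added sanity check on the regularity hypotheses is a welcome elaboration that the paper leaves implicit.
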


\section{very I-favorable spaces with respect to the co-zero sets}

We say that a space $X$ is \textit{very I-favorable with respect to
the co-zero sets} if there exists a strong winning strategy $\sigma
:\bigcup\{ \Sigma_X^n: n\geq 0\} \to\Sigma_X$, where $\Sigma_X$
denotes the collection of all co-zero sets in $X$. By Proposition
\ref{club-strategy}, this is equivalent to the existence of a club in the family
$\{\Pee\in [\Sigma_X]^{\leq \omega}: \Pee\subset_! \Sigma_X\}$.

A completely regular space $X$ is d-\textit{openly generated} if $X$
is the almost limit of a $\sigma$-complete inverse system
$\displaystyle S=\{X_\sigma, \pi^{\sigma}_\varrho, \Gamma\}$
consisting of separable metric spaces $X_\sigma$ and d-open
surjective bonding maps $\pi^{\sigma}_\varrho$.

\begin{thm}\label{4.1}
A completely regular space $X$ is very $\mathrm I$-favorable with
respect to the co-zero sets if and only if $X$ is $\mathrm d$-openly
generated.
\end{thm}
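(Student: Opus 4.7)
The plan is to mirror the architecture of Theorem \ref{represent}, with the bonding spaces refined to be separable metric and with the family of open sets replaced everywhere by co-zero sets. For the forward implication, assume $X$ is very $\mathrm{I}$-favorable with respect to the co-zero sets. By Proposition \ref{club-strategy} applied with $\Qee=\Sigma_X$, there is a club $\Cee_0\subset\{\Pee\in[\Sigma_X]^{\leq\omega}:\Pee\subset_!\Sigma_X\}$ whose members are closed under finite intersections. Fix, for each $U\in\Sigma_X$, a continuous function $f_U\colon X\to[0,1]$ with $U=f_U^{-1}((0,1])$ (available by complete regularity), together with a countable base $\Bee_{[0,1]}$ of $[0,1]$ containing $(0,1]$. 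Let $\mathcal E$ be the collection of all countable $A\subset\Sigma_X$ that contain $X$, are closed under finite intersections, and are closed under the operations $U\mapsto f_U^{-1}(W)$ for $W\in\Bee_{[0,1]}$; a direct check shows $\mathcal E$ is a club. Since the club filter on $[\Sigma_X]^{\leq\omega}$ is countably closed, the intersection $\Cee:=\Cee_0\cap\mathcal E$ is a club whose members $A$ satisfy $A\subset_!\Sigma_X$, cover $X$, and enjoy the structural closure property.

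For each $A\in\Cee$, set $X_A=e_A(X)$, where $e_A\colon X\to[0,1]^A$ sends $x\mapsto(f_U(x))_{U\in A}$, equipped with the subspace topology inherited from $[0,1]^A$. Since $A$ is countable, $X_A$ is a separable metric space and $e_A$ is a continuous surjection. The central observation is that $e_A$ is d-open: a basic open set $V=X_A\cap\prod_{U\in F}W_U\times[0,1]^{A\setminus F}$ of $X_A$ (with $F\subset A$ finite and $W_U\in\Bee_{[0,1]}$) satisfies $e_A^{-1}(V)=\bigcap_{U\in F}f_U^{-1}(W_U)\in A$ by the closure properties, and conversely every $U\in A$ arises as $e_A^{-1}(V)$ with $F=\{U\}$ and $W_U=(0,1]$. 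Thus the countable family $\Pee_A=\{e_A^{-1}(V):V\text{ basic open in }X_A\}$ coincides with $A$, so $\Pee_A\subset_!\Sigma_X$, and Remark \ref{2.2} yields that $e_A$ is d-open. For $B\subset A$ in $\Cee$, the coordinate projection $[0,1]^A\to[0,1]^B$ restricts to a continuous surjection $q^A_B\colon X_A\to X_B$ with $q^A_B\circ e_A=e_B$; since $e_A$ is a continuous surjection and $e_B$ is d-open, Lemma \ref{d-open1} gives that $q^A_B$ is d-open.

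The inverse system $S=\{X_A,q^A_B,\Cee\}$ is $\sigma$-complete because $\Cee$ is closed under countable increasing chains. The diagonal map $h\colon X\to\underleftarrow{\lim}S$, $h(x)=(e_A(x))_{A\in\Cee}$, is continuous; it is injective because, for any $x\neq y$, complete regularity yields disjoint $U,V\in\Sigma_X$ with $x\in U,y\in V$, whence $f_U(x)>0=f_U(y)$, and $U$ lies in some $A\in\Cee$. Since $\bigcup\Cee=\Sigma_X$ is a base of $X$, the preimages under $h$ of the basic open sets of $\underleftarrow{\lim}S$ coincide with $\bigcup\Cee$, so $h$ is an embedding; combined with $\pi_A(h(X))=e_A(X)=X_A$, this exhibits $X$ as an almost limit of $S$, i.e., $X$ is $\mathrm{d}$-openly generated.

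For the converse, assume $X$ is $\mathrm{d}$-openly generated via $S=\{X_\sigma,\pi^\sigma_\varrho,\Gamma\}$. Lemma \ref{d-open2} shows each limit projection $p_\sigma$ is d-open, and the restriction $q_\sigma=p_\sigma|X$ is a d-open surjection onto $X_\sigma$ (using density of $X$ and $q_\sigma(X)=X_\sigma$). Since each $X_\sigma$ is separable metric its open sets coincide with its co-zero sets, so for a fixed countable base $\Bee_\sigma$ of $X_\sigma$ the countable family $\Pee_\sigma=\{q_\sigma^{-1}(V):V\in\Bee_\sigma\}$ is contained in $\Sigma_X$, and d-openness of $q_\sigma$ together with Proposition \ref{d-open}(4) yields $\Pee_\sigma\subset_!$. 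The union $\Bee=\bigcup_{\sigma\in\Gamma}\Pee_\sigma$ is then a base of $X$ contained in $\Sigma_X$, and $\sigma$-completeness of $S$ together with the directedness argument from the proof of Theorem \ref{represent} show that $\{\Pee_\sigma:\sigma\in\Gamma\}$ generates a club in $\{\Pee\in[\Bee]^{\leq\omega}:\Pee\subset_!\Bee\}$. Applying Corollary \ref{2.8} and Proposition \ref{club-strategy} then gives very $\mathrm{I}$-favorability with respect to the co-zero sets. The principal obstacle lies in the forward direction: arranging the auxiliary refinement $\mathcal E$ so as to secure the precise equality $A=\Pee_A$, which is exactly what makes Remark \ref{2.2} applicable and drives the whole construction.
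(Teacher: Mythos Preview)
Your converse direction is essentially the paper's argument: pull back countable bases along the d-open limit projections, observe that the resulting families are countable subfamilies of $\Sigma_X$ with the $\subset_!$ property, assemble a club inside a co-zero base, and finish with Corollary~\ref{2.8}.

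Your forward direction is correct but organized differently from the paper's. The paper fixes a single $C^*$-embedding $X\hookrightarrow\mathbb I^A$ and then, for each countable $B\subset A$, runs an explicit $\omega$-stage closing-off (Claim~2) that simultaneously closes under the strong winning strategy $\sigma$, under finite intersections, and under ``factoring'' (so that the resulting family is exactly the pullback family of a base of some $X_\Gamma$ with $\Gamma\subset A$ countable); d-openness of $p_\Gamma$ then comes from Lemma~\ref{2.4} and Proposition~\ref{d-open}. You instead stay intrinsic: you fix once and for all a witness $f_U$ for each $U\in\Sigma_X$, form the auxiliary club $\mathcal E$ of families closed under $U\mapsto f_U^{-1}(W)$, intersect with the club $\Cee_0$ coming from Proposition~\ref{club-strategy}, and for each $A$ in the intersection take the diagonal $e_A\colon X\to[0,1]^A$. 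The key identity $A=\{e_A^{-1}(V):V\text{ basic open in }X_A\}$ (forced by the closure properties defining $\mathcal E$) lets you invoke Remark~\ref{2.2} directly; the $C^*$-embedding and the inductive Claims~1--3 are thereby replaced by a single club-intersection step. What your route buys is economy and a cleaner separation of concerns (the ``factorization'' data is packaged into $\mathcal E$ rather than interleaved with the strategy); what the paper's route buys is an explicit index set $\Lambda\subset[A]^{\leq\omega}$ sitting inside a fixed ambient cube, which makes the verification that $X$ is dense in $\underleftarrow{\lim}S\subset\mathbb I^A$ immediate. Both arguments ultimately hinge on the same mechanism: arranging that a countable family closed under the strategy is \emph{exactly} a pullback family of a base, so that $\subset_!$ translates into d-openness.
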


\begin{proof}
Suppose $X$ is very $\mathrm I$-favorable with respect to the
co-zero sets and $\sigma :\bigcup\{ \Sigma_X^n: n\geq 0\}
\to\Sigma_X$ is a strong winning strategy in $\Sigma_X$. We place
$X$ as a $C^*$-embedded subset of a Tychonoff cube $\mathbb I^A$. If
$B\subset A$, let $\pi_B\colon\mathbb I^A\to\mathbb I^B$ be the
natural projection and $p_B$ be restriction map $\pi_B|X$. Let also
$X_B=p_B(X)$. If $U\subset X$ we write $B\in k(U)$ to denote that
$p_{B}^{-1}\big(p_{B}(U)\big)=U$.

\textit{Claim $1.$ For every $U\in\Sigma_X$ there exists a
countable $B_U\subset A$ such that $B_U\in k(U)$ with $p_{B_U}(U)$
being a co-zero set in $X_{B_U}$}.

For every $U\in\Sigma_X$ there exists a continuous function
$f_U\colon X\to [0,1]$ with $f_U^{-1}\big((0,1]\big)=U$. Next,
extend $f_U$ to a continuous function $g\colon\mathbb I^A\to [0,1]$
(recall that $X$ is $C^*$-embedded in $\mathbb I^A$). Then, there
exists a countable set $B_U\subset A$ and a function $h\colon\mathbb
I^{B_U}\to [0,1]$ with $g=h\circ\pi_{B_U}$. Obviously,
$U=p_{B_U}^{-1}\big(h^{-1}\big((0,1]\big)\cap p_{B_U}(X)\big)$,
which completes the proof of the claim.

Let $\Bee=\{U_\alpha:\alpha<\tau\}$ be a base for the topology of
$X$ consisting of co-zero sets such that for each $\alpha$ there
exists a finite set $H_\alpha\subset A$ with $H_\alpha\in
k(U_\alpha)$. For any finite set $C\subset A$ let $\gamma_C$ be a
fixed countable base for $X_C$.

\textit{Claim $2$. For every countable $B\subset A$ there exists a
countable set $\Gamma\subset A$ containing $B$ and a countable
family $\Uee_\Gamma\subset\Sigma_X$ satisfying the following
conditions:
\begin{itemize}
\item[(i)] $\Uee_\Gamma$ is closed under $\sigma$ and finite intersections;
\item[(ii)] $\Gamma\in k(U)$ for all $U\in\Uee_\Gamma$;
\item[(iii)] $\Bee_\Gamma=\{p_{\Gamma}(U):U\in\Uee_\Gamma\}$ is a base for $p_{\Gamma}(X)$.
\end{itemize}}

We construct by induction a sequence $\{C(m)\}_{m\geq 0}$ of
countable subsets of $A$, and a sequence $\{\mathcal V_m\}_{m\geq
0}$ of countable subfamilies of  $\Sigma_X$ such that:
\begin{itemize}
\item $C_0=B$ and $\mathcal V_0=\{p_B^{-1}(V):V\in\Bee_B\}$, where $\Bee_B$ is a base for $X_B$;
\item $C(m+1)=C(m)\cup\bigcup\{B_U:U\in\mathcal V_m\}$;
\item $\mathcal V_{3m+1}=\mathcal V_{3m}\cup\{\sigma(U_1,..,U_n): U_1,..,U_n\in\mathcal
V_{3m}, n\geq 1\}$;
\item $\mathcal V_{3m+2}=\mathcal
V_{3m+1}\cup\bigcup\{p_C^{-1}(\gamma_C):C\subset C(3m+1){~}\mbox{is
finite}\}$; \item $\mathcal V_{3m+3}=\mathcal
V_{3m+2}\cup\{\bigcap_{i=1}^{i=n}U_i: U_1,..,U_n\in\mathcal
V_{3m+2}, n\geq 1\}$.
\end{itemize}

It is easily seen that the set $\Gamma=\bigcup_{m=0}^{\infty}C_m$
and the family $\Uee_\Gamma=\bigcup_{m=0}^{\infty}\mathcal V_m$
satisfy the conditions (i)-(iii) from Claim 2.

\textit{Claim $3$. The map $p_{\Gamma}\colon X\to X_{\Gamma}$ is a
d-open map.}

It follows from (ii) that
$\Uee_\Gamma=\{p_{\Gamma}^{-1}(V):V\in\Bee_\Gamma\}$. According to
Lemma \ref{2.4}, $\Uee_\Gamma\subset_!\Sigma_X$. Consequently,
$\Uee_\Gamma\subset_!\Tee_X$. Therefore, we can apply Proposition
\ref{d-open} to conclude that $p_{\Gamma}$ is d-open.

Now, consider the family $\Lambda$ of all $\Gamma\in
[A]^{\leq\omega}$ such that there exists a countable family
$\Uee_\Gamma\subset\Sigma_X$ satisfying the conditions (i)-(iii)
from Claim 2. We consider the inverse system $S=\{X_\Gamma,
p^\Gamma_\Theta, \Lambda\}$, where $\Theta\subset\Gamma\in\Lambda$
and $p^\Gamma_\Theta\colon X_\Gamma\to X_\Theta$ is the restriction
of the projection $\pi^\Gamma_\Theta\colon\mathbb I^\Gamma\to\mathbb
I^\Theta$ on the set $X_\Gamma$. Since
$p_{\Theta}=p^\Gamma_\Theta\circ p_{\Gamma}$ and both $p_{\Gamma}$
and $p_{\Theta}$ are d-open surjections, $p^\Gamma_\Theta$ is also
d-open (see Lemma \ref{d-open1}). Moreover, the union of any increasing chain
in $\Lambda$ is again in $\Lambda$. So, $\Lambda$, equipped the
inclusion order, is $\sigma$-complete. Finally, by Claim 2,
$\Lambda$ covers the set $A$. Therefore, the limit of $S$ is a
subset of $\mathbb I^A$ containing $X$ as a dense subset. Hence, $X$
is d-openly generated.

Suppose that $X$ is d-openly generated. So,
$X=\displaystyle\mathrm{a}-\underleftarrow{\lim}S$, where
$S=\{X_\sigma, p^{\sigma}_\varrho, \Gamma\}$ is a $\sigma$-complete
inverse system consisting of separable metric spaces $X_\sigma$ and
d-open surjective bonding maps $p^{\sigma}_\varrho$. Let
$p_\sigma\colon\underleftarrow{\lim}S\to X_\sigma$,
$\sigma\in\Gamma$, be the limit projections and
$q_\sigma=p_\sigma|X$. As in the proof of Theorem \ref{represent}, we can show
that $\Pee=\{\Pee_\sigma:\sigma\in\Gamma\}$ is a club in the family
$\{\Qee\in [\Bee_X]^{\leq\omega}:\Qee\subset_!\Bee_X\}$, where
$\Bee_X=\bigcup\{\Pee_\sigma:\sigma\in\Gamma\}$ and
$\Pee_\sigma=\{q_\sigma^{-1}(V):V\in\Bee_\sigma\}$ with
$\Bee_\sigma$ being a countable base for the topology of $X_\sigma$.
Since $\Bee_X$ consists of co-zero sets, by Corollary \ref{2.8}, the
family $\{\Qee\in [\Sigma_X]^{\leq \omega}: \Qee\subset_!
\Sigma_X\}$ contains also a club. Hence, $X$ is very I-favorable
with respect to the co-zero sets.
\end{proof}

We say that a space $X\subset Y$ is regularly embedded in $Y$ is
there exists a function $\mathrm e\colon\Tee_X\to\Tee_Y$ satisfying
the following conditions for any $U,V\in\Tee_X$:
\begin{itemize}
\item $\mathrm e(\emptyset)=\emptyset$;
\item $\mathrm e(U)\cap X=U$;
\item $\mathrm e(U)\cap\mathrm e(V)=\emptyset$ provided $U\cap
V=\emptyset$.
\end{itemize}

Theorem \ref{4.1} and \cite[Theorem 2.1(ii)]{va1} yield the following
external characterization of very I-favorable spaces with respect to
the co-zero sets (I-favorable spaces with respect to the co-zero
sets have a similar external characterization, see \cite[Theorem
1.1]{vv}).

\begin{cor}\label{4.2}
A completely regular space is very $\mathrm I$-favorable with
respect to the co-zero sets if and only if every $C^*$-embedding of
$X$ in any Tychonoff space $Y$ is regular.
\end{cor}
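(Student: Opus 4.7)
The plan is to derive Corollary \ref{4.2} as a direct combination of Theorem \ref{4.1} with the external characterization of d-openly generated spaces supplied by \cite[Theorem 2.1(ii)]{va1}. Theorem \ref{4.1} already gives the internal equivalence: very $\mathrm I$-favorability with respect to the co-zero sets is the same as being d-openly generated. So the task reduces to translating the d-openly generated condition into the statement about regular $C^*$-embeddings, and for this the cited theorem of the third author asserts precisely that a completely regular space $X$ is d-openly generated if and only if every $C^*$-embedding of $X$ into a Tychonoff space $Y$ admits an extender $\mathrm e\colon\Tee_X\to\Tee_Y$ of the three-condition type described just before the corollary.

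For the forward direction I would assume that $X$ is very $\mathrm I$-favorable with respect to the co-zero sets; Theorem \ref{4.1} then produces a $\sigma$-complete inverse system $S=\{X_\sigma, p^{\sigma}_\varrho,\Gamma\}$ of separable metric spaces with d-open surjective bonding maps such that $X=\mathrm{a}-\underleftarrow{\lim}S$. Feeding this presentation into \cite[Theorem 2.1(ii)]{va1} yields that every $C^*$-embedding of $X$ into a Tychonoff space $Y$ is regular. For the converse I would suppose every $C^*$-embedding of $X$ into a Tychonoff space is regular; applying \cite[Theorem 2.1(ii)]{va1} in the other direction (one can for instance take the embedding of $X$ as a $C^*$-embedded subset of a Tychonoff cube $\mathbb I^A$, exactly as in the proof of Theorem \ref{4.1}) gives that $X$ is d-openly generated, whence by Theorem \ref{4.1} it is very $\mathrm I$-favorable with respect to the co-zero sets.

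There is no real obstacle here; the corollary is essentially a bookkeeping combination of two previously established facts, and the only point requiring care is to verify that the notion of \emph{regularly embedded} spelled out just above the corollary is exactly the one used in \cite[Theorem 2.1(ii)]{va1}, so that the two characterizations of d-openly generated spaces can be chained without loss.
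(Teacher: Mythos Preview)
Your proposal is correct and matches the paper's approach exactly: the paper does not give a separate proof of Corollary \ref{4.2} but simply states (in the sentence preceding it) that the result follows from Theorem \ref{4.1} together with \cite[Theorem 2.1(ii)]{va1}. Your expanded explanation of how the two directions are obtained is faithful to this intended argument.
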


The next corollary provides an answer of a question from \cite{vv}
whether there exists a characterization of $\kappa$-metrizable
compacta in terms a game between two players.

\begin{cor}\label{4.3}
A compact Hausdorff space is very $\mathrm I$-favorable with respect
to the co-zero sets if and only if $X$ is $\kappa$-metrizable.
\end{cor}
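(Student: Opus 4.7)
The plan is to reduce the corollary to Theorem \ref{4.1} together with Shchepin's classical characterization: a compact Hausdorff space is $\kappa$-metrizable if and only if it is the limit of a $\sigma$-complete inverse system of compact metric spaces with \emph{open} surjective bonding maps. Once this reduction is made, the whole issue boils down to comparing the words \textit{d-open} and \textit{open} in the representing inverse systems.

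I would handle the easy direction first. If $X$ is $\kappa$-metrizable, Shchepin's theorem presents $X$ as $\underleftarrow{\lim} S$ for a $\sigma$-complete system $S$ of compact metric spaces with open surjective bonding maps; since every open map is d-open, the same system exhibits $X$ as d-openly generated, so Theorem \ref{4.1} gives that $X$ is very $\mathrm{I}$-favorable with respect to the co-zero sets. For the converse, let $X$ be compact Hausdorff and very $\mathrm{I}$-favorable with respect to the co-zero sets. Theorem \ref{4.1} produces a $\sigma$-complete inverse system $S=\{X_\sigma,\pi^\sigma_\varrho,\Gamma\}$ of separable metric spaces with d-open surjective bonding maps such that $X$ embeds densely in $\underleftarrow{\lim} S$ and $\pi_\sigma(X)=X_\sigma$ for every $\sigma$. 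The limit space is Hausdorff, $X$ is compact and dense in it, so $X=\underleftarrow{\lim} S$; in particular every $X_\sigma$, being the continuous image of the compact $X$ under $\pi_\sigma$, is a compact separable metric space.

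The main obstacle, and the only non-formal step, is to promote the d-open bonding maps to genuinely open ones so that Shchepin's theorem becomes applicable. I would isolate the following lemma: if $f\colon K\to L$ is a continuous d-open surjection from a compact Hausdorff $K$ onto a Hausdorff $L$, then $f$ is open. To prove it, fix an open $U\subset K$ and $x\in U$; normality of $K$ gives an open $V$ with $x\in V\subset \cl_K V\subset U$. Since $f(\cl_K V)$ is compact and $L$ is Hausdorff, $f(\cl_K V)$ is closed, so $\cl_L f(V)\subset f(\cl_K V)$. Combining this containment with the d-openness of $f$ from Proposition \ref{d-open} (3),
\[
f(x)\in f(V)\subset \int_L \cl_L f(V)\subset \int_L f(\cl_K V)\subset \int_L f(U),
\]
so $f(x)$ is interior to $f(U)$ and hence $f(U)$ is open. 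Applying this lemma to every bonding map $\pi^\sigma_\varrho$ of $S$ converts $S$ into a $\sigma$-complete inverse system of compact metric spaces with open surjective bonding maps whose limit is $X$, and Shchepin's theorem concludes that $X$ is $\kappa$-metrizable.
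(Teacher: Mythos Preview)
Your argument is correct and follows exactly the route the paper takes: reduce to Theorem \ref{4.1} plus Shchepin's spectral characterization of $\kappa$-metrizable compacta, and bridge the two via the fact that a d-open surjection between compact Hausdorff spaces is open. The paper merely asserts this last fact, whereas you supply a clean proof of it (and also spell out why $X$ coincides with the full limit and why each $X_\sigma$ is compact metric), so your version is a fleshed-out form of the same proof.
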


\begin{proof}
A compact Hausdorff space is $\kappa$-metrizable spaces iff $X$ is
the limit space of a $\sigma$-complete inverse system consisting of
compact metric spaces and open surjective bonding maps, see
\cite{s81} and \cite{s76}. Since every d-open surjective map between
compact Hausdorff spaces is open, this corollary follows from
Theorem \ref{4.1}.
\end{proof}

Recall that a normal space is called perfectly normal if every open
set is a co-zero set. So, any perfectly normal spaces is very
I-favorable if and only if it is very I-favorable with respect to
the co-zero sets. Thus, we have the next corollary.

\begin{cor}\label{4.4}
Every perfectly normal very $\mathrm I$-favorable space is d-openly
generated.
\end{cor}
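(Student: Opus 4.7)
The plan is to deduce Corollary \ref{4.4} directly from Theorem \ref{4.1} together with the observation immediately preceding the corollary. The key point is that in a perfectly normal space $X$ every open subset is a co-zero set, so the equality $\Sigma_X=\Tee_X$ holds. Consequently the two families
\[
\{\Pee\in[\Tee_X]^{\leq\omega}:\Pee\subset_!\Tee_X\}
\qquad\text{and}\qquad
\{\Pee\in[\Sigma_X]^{\leq\omega}:\Pee\subset_!\Sigma_X\}
\]
coincide, and a club exists in one if and only if it exists in the other.

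First I would recall the assumption that $X$ is very $\mathrm I$-favorable, which by the definition from the introduction means that the first family above contains a club. Using perfect normality, I would rewrite this as the statement that the second family contains a club. By Proposition \ref{club-strategy} applied with $\Qee=\Sigma_X$ (which is closed under finite intersections since the intersection of two co-zero sets is a co-zero set), this is in turn equivalent to the existence of a strong winning strategy $\sigma\colon\bigcup_{n\geq 0}\Sigma_X^n\to\Sigma_X$, i.e. to $X$ being very $\mathrm I$-favorable with respect to the co-zero sets in the sense of Section 4.

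Having established that $X$ is very $\mathrm I$-favorable with respect to the co-zero sets, I would then invoke Theorem \ref{4.1} to conclude that $X$ is $\mathrm d$-openly generated, which completes the argument. There is no real obstacle here: the whole content is the identification $\Sigma_X=\Tee_X$ for perfectly normal spaces, everything else is a direct citation of the previous results. The proof is therefore a two-line argument and I would present it in essentially one sentence referencing perfect normality and Theorem \ref{4.1}.
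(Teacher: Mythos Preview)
Your proposal is correct and matches the paper's argument exactly: the paper notes, in the sentence immediately preceding Corollary~\ref{4.4}, that in a perfectly normal space every open set is a co-zero set, so very $\mathrm I$-favorability and very $\mathrm I$-favorability with respect to co-zero sets coincide, and then Theorem~\ref{4.1} gives the conclusion.
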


\begin{lem}\label{ro-strategy}
Let $(X,\Tee)$ be a completely regular space. If there is a strong
winning strategy $\sigma' :\bigcup \{ \Tee^n: n\geq 0\} \to \Tee$,
then there is a strong winning strategy $\sigma :\bigcup \{ \Raa^n:
n\geq 0\} \to \Raa $, where $\Raa$ consists of all regular open
subset of $X$.
\end{lem}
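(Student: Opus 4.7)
The plan is to transfer $\sigma'$ to the regular open family by passing its history through a recursively-defined ``shadow'' that remains a legitimate play in the $\Tee$-game. First I would define, for each candidate $\Raa$-history $(R_0,\ldots,R_n)$ compatible with the strategy under construction, an auxiliary $\Tee$-history by
\[
\tilde R_0 = R_0\cap\sigma'(\varnothing),\qquad \tilde R_{k+1}=R_{k+1}\cap\sigma'(\tilde R_0,\ldots,\tilde R_k),
\]
and set
\[
\sigma(\varnothing)=\int_X\cl_X\sigma'(\varnothing),\qquad \sigma(R_0,\ldots,R_n)=\int_X\cl_X\sigma'(\tilde R_0,\ldots,\tilde R_n)\in\Raa.
\]
An easy induction on $n$ shows the construction is well defined: $\sigma'(\tilde R_0,\ldots,\tilde R_{k-1})$ is dense in its regularisation $\sigma(R_0,\ldots,R_{k-1})$, so its intersection with the nonempty open set $R_k\subset\sigma(R_0,\ldots,R_{k-1})$ is a nonempty open subset of $R_k$ dense in $R_k$, and $(\tilde R_n)_n$ is a legitimate $\sigma'$-play. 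In particular $\cl_X\tilde R_k=\cl_X R_k$ and $\cl_X\sigma'(\tilde R_0,\ldots,\tilde R_k)=\cl_X\sigma(R_0,\ldots,R_k)$ for every $k$.

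Next I would verify the strong winning condition. Put
\[
\Fee=\{R_n:n\in\omega\}\cup\{\sigma(R_0,\ldots,R_n):n\in\omega\},
\]
\[
\Fee'=\{\tilde R_n:n\in\omega\}\cup\{\sigma'(\tilde R_0,\ldots,\tilde R_n):n\in\omega\}.
\]
Since $\sigma'$ is strong winning, $\Fee'\subset_!\Tee$. Fix $\See\subset\Fee$ and $x\notin\cl_X\bigcup\See$, and let $\See'\subset\Fee'$ be the family of tilde-counterparts of members of $\See$. Each element of $\See'$ is contained in the corresponding element of $\See$, so $\bigcup\See'\subset\bigcup\See$ and $x\notin\cl_X\bigcup\See'$. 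Applying $\Fee'\subset_!\Tee$ yields $W\in\Fee'$ with $x\in W$ and $W\cap\bigcup\See'=\emptyset$; let $W'\in\Fee$ be its $\Fee$-counterpart, so $W\subset W'$ and hence $x\in W'$. If $W'\cap U\neq\emptyset$ for some $U\in\See$, then density of $W$ in $W'$ gives $W\cap U\neq\emptyset$, and density of the tilde-counterpart $\tilde U\in\See'$ in $U$ then gives $W\cap\tilde U\neq\emptyset$, contradicting $W\cap\bigcup\See'=\emptyset$. Thus $W'\in\Raa$ witnesses $\Fee\subset_!\Raa$.

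The main obstacle is the choice of the formula for $\sigma$. The naive guess $\sigma(R_0,\ldots,R_n)=\int_X\cl_X\sigma'(R_0,\ldots,R_n)$ fails, because $R_{n+1}\subset\sigma(R_0,\ldots,R_n)$ does not entail $R_{n+1}\subset\sigma'(R_0,\ldots,R_n)$, so the $R_n$'s by themselves do not form a $\sigma'$-play and the hypothesis cannot be invoked. The recursive refinement $\tilde R_n=R_n\cap\sigma'(\tilde R_0,\ldots,\tilde R_{n-1})$ restores legitimacy of the play while keeping closures unchanged, and the latter is precisely what allows the $\subset_!$ property to descend from $\Fee'$ to $\Fee$.
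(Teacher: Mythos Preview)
Your proof is correct and follows essentially the same approach as the paper: both define the new strategy via $\sigma(R_0,\ldots,R_n)=\int_X\cl_X\sigma'(\tilde R_0,\ldots,\tilde R_n)$ with the recursively refined shadow play $\tilde R_{k+1}=R_{k+1}\cap\sigma'(\tilde R_0,\ldots,\tilde R_k)$, and both transfer the $\subset_!$ property from $\Fee'$ to $\Fee$ through the associated family $\See'$. Your final verification is in fact a bit cleaner than the paper's: where the paper performs a case analysis on whether the witnessing $W'$ and an arbitrary $U'$ are of type $V'_{k+1}$ or of type $\sigma'(V'_0,\ldots,V'_k)$, you instead use directly the density relations $\cl_X\tilde R_k=\cl_X R_k$ and $\cl_X\sigma'(\tilde R_0,\ldots,\tilde R_k)=\cl_X\sigma(R_0,\ldots,R_k)$ to argue in one stroke that $W'\cap U\neq\emptyset$ would force $W\cap\tilde U\neq\emptyset$.
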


\begin{proof}
Assume that $\sigma' :\bigcup \{ \Tee^n: n\geq 0\} \to \Tee $ is a
strong winning strategy. We define a strong winning strategy on
$\Raa$. Let $\sigma(\varnothing)=\int\cl \sigma'(\varnothing)$. We
define by induction $\sigma((V_0, V_1, \ldots, V_k))$,
$V_{k+1}\subset\sigma((V_0, V_1, \ldots, V_k))$, by
$$\sigma((V_0, V_1, \ldots, V_{n+1}))=\int\cl\sigma'((V'_0, V'_1, \ldots, V'_{n+1})),$$
where $V'_{k+1}=V_{k+1}\cap \sigma'((V'_0, V'_1, \ldots, V'_k))$.

Let us show that $\Fee=\{V_n:n\in\omega\}\cup\{\sigma((V_0, V_1,
\ldots, V_{n+1})):n\in \omega\}\subset_!\Raa$. If $\See\subset\Fee$
and $x\not\in\cl\bigcup\See$, let
$$\Fee'=\{V'_n:n\in\omega\}\cup\{\sigma'((V'_0, V'_1, \ldots,
V'_{n+1})):n\in\omega\}$$ and $$\See'=\{W'\in\Fee':W\in\See\}.$$
Note that $\bigcup\See'\subset\bigcup\See$, hence
$x\not\in\cl\bigcup\See'$. So, there is $W'\in\See'$ such that $W'\cap
U'=\emptyset$ for all $U'\in\Fee'$. Assume that
$W'=V_{k+1}\cap\sigma'((V'_0, V'_1, \ldots, V'_{k}))$ and $U'=
V_{i+1}\cap\sigma'((V'_0, V'_1, \ldots, V'_i))$. Then we infer that
$$V_{k+1}\cap\int\cl\sigma'((V'_0, V'_1, \ldots, V'_k))\cap
V_{i+1}\cap\int\cl\sigma'((V'_0, V'_1, \ldots, V'_i))=\emptyset.$$
Since $V_{k+1}\subset\sigma((V_0, V_1, \ldots, V_k))=
\int\cl\sigma'((V'_0, V'_1, \ldots, V'_{k}))$ and $V_{i+1}\subset
\sigma((V_0, V_1, \ldots, V_i))=\int\cl\sigma'((V'_0, V'_1, \ldots,
V'_{i}))$, we get $V_{k+1}\cap V_{i+1}=\emptyset$. Suppose
$W'=V_{k+1}\cap\sigma'((V'_0, V'_1, \ldots, V'_{k}))$ and
$U'=\sigma'((V'_0, V'_1, \ldots, V'_i))$. Then
$$V_{k+1}\cap\int\cl\sigma'((V'_0, V'_1, \ldots, V'_k))\cap
\int\cl\sigma'((V'_0, V'_1, \ldots, V'_i))=\emptyset.$$ So, $W\cap
U=\emptyset$. Similarly, we obtain $W\cap U=\emptyset$ if
$W'=\sigma'((V'_0, V'_1, \ldots, V'_{k}))$ and $U'= \sigma'((V'_0,
V'_1, \ldots, V'_i))$. This completes the proof.
\end{proof}

We say that a topological space $X$ is \textit{perfectly
$\kappa$-normal} if for every open and disjoint   subset $U,V$ there
are open $F_\sigma $ subset $W_U,W_V$ with $W_U\cap W_V=\emptyset$
and $U\subset W_U$ and $V\subset W_V$. It is clear that a space $X$
is perfectly $\kappa$-normal if and only if that each regular open
set in $X$ is $F_\sigma$.

\begin{pro}\label{very}
If a normal perfectly $\kappa$-normal space is a continuous image of
a very $\mathrm I$-favorable space under a perfect map, then $X$ is
d-openly generated.
\end{pro}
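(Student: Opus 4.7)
The plan is to chain together the preparatory results of this section in order to move from a winning strategy on all of $\Tee_X$ down to one on $\Sigma_X$, at which point Theorem \ref{4.1} finishes the job.

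First I would apply Theorem \ref{k-adic}: since $X$ is the continuous image of a very $\mathrm I$-favorable space under a perfect map and $X$ is regular (being normal Hausdorff), $X$ is itself very $\mathrm I$-favorable. By Proposition \ref{club-strategy} applied with $\Qee=\Tee_X$ (which is closed under finite intersections), this yields a strong winning strategy $\sigma':\bigcup\{\Tee_X^n:n\geq 0\}\to\Tee_X$. Then Lemma \ref{ro-strategy} converts $\sigma'$ into a strong winning strategy on the family $\Raa$ of regular open subsets of $X$; since $\Raa$ is closed under finite intersections, Proposition \ref{club-strategy} once more produces a club in $\{\Pee\in[\Raa]^{\leq\omega}:\Pee\subset_!\Raa\}$.

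Next I would invoke the topological hypotheses on $X$. Perfect $\kappa$-normality says every regular open subset of $X$ is an $F_\sigma$-set, and in a normal space every open $F_\sigma$-set is a co-zero set (pick an Urysohn function separating each closed member of the $F_\sigma$-representation from the complement and combine them into a single positive continuous function vanishing precisely off the open $F_\sigma$). Consequently $\Raa\subset\Sigma_X$; moreover, regularity of $X$ ensures that $\Raa$ is a base for the topology. These are exactly the hypotheses of Corollary \ref{2.8} with $\Bee=\Raa$, so $\{\Pee\in[\Sigma_X]^{\leq\omega}:\Pee\subset_!\Sigma_X\}$ contains a club. By definition, $X$ is very $\mathrm I$-favorable with respect to the co-zero sets, and Theorem \ref{4.1} then yields that $X$ is d-openly generated.

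The only delicate point is the inclusion $\Raa\subset\Sigma_X$: everything else is bookkeeping with clubs and strategies, but this step rests squarely on the combined use of perfect $\kappa$-normality (which supplies the $F_\sigma$ presentation of a regular open set) and normality (which upgrades an open $F_\sigma$-set to a co-zero set). Without both ingredients, Corollary \ref{2.8} cannot be applied and the chain collapses.
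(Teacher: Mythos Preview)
Your proof is correct and follows the same route as the paper: Theorem~\ref{k-adic} for very $\mathrm I$-favorability of $X$, Lemma~\ref{ro-strategy} to pass to a strong winning strategy on $\Raa$, the combination of normality and perfect $\kappa$-normality to place $\Raa$ inside $\Sigma_X$, and then Theorem~\ref{4.1} to finish. You are in fact more explicit than the paper at one point: the paper writes ``according to Lemma~\ref{ro-strategy}, $X$ is very $\mathrm I$-favorable with respect to the co-zero sets'' without spelling out the passage from a strategy on $\Raa$ to a club in $\{\Pee\in[\Sigma_X]^{\leq\omega}:\Pee\subset_!\Sigma_X\}$, whereas you correctly route this step through Corollary~\ref{2.8} (using that $\Raa$ is a base contained in $\Sigma_X$ and is closed under finite intersections).
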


\begin{proof}
Every open $F_\sigma$-subset of a normal space is a co-zero set, see
\cite{eng}. So, every regular open subset of a normal and perfectly
$\kappa$-normal space is a co-zero set. Consequently, if $X$ is the
image of very I-favorable space and $X$ is normal and perfectly
$\kappa$-normal, then $X$ is very I-favorable (see Theorem
\ref{k-adic}). Hence, according to Lemma \ref{ro-strategy}, $X$ is a very I-favorable with
respect to the co-zero sets. Finally, Theorem \ref{4.1} implies that $X$
is d-openly generated.
\end{proof}

\begin{cor}\label{4.7}
If the image of a compact Hausdorff very $\mathrm I$-favorable space
under a continuous map is perfectly $\kappa$-normal, then $X$ is
$\kappa$-metrizable.
\end{cor}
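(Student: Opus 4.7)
The plan is to combine Proposition \ref{very} with Corollary \ref{4.3}, reducing the statement to verifying that the hypotheses of Proposition \ref{very} are met. Write $f\colon Y\to X$ for the continuous surjection from the compact Hausdorff very $\mathrm I$-favorable space $Y$ onto $X$. Since $X$ is perfectly $\kappa$-normal we may assume $X$ is Hausdorff, and because $Y$ is compact Hausdorff while $X$ is Hausdorff, $f$ is automatically a perfect map (continuous maps from compact to Hausdorff spaces are closed, and point preimages sit inside the compact $Y$).

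Next, observe that $X$, being a continuous image of a compact Hausdorff space, is itself compact Hausdorff, and hence normal. So $X$ is a normal, perfectly $\kappa$-normal space which is the continuous image of a very $\mathrm I$-favorable space under a perfect map. Proposition \ref{very} therefore applies directly and yields that $X$ is d-openly generated.

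Finally, invoke Theorem \ref{4.1} to conclude that $X$ is very $\mathrm I$-favorable with respect to the co-zero sets, and then Corollary \ref{4.3} to promote this to $\kappa$-metrizability of the compact Hausdorff space $X$. The only mildly subtle point is the automatic perfectness of $f$, which is immediate under our compactness assumption; all the real content is packaged into the previously established Proposition \ref{very} and Corollary \ref{4.3}, so no further obstacle is expected.
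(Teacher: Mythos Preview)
Your argument is correct and matches the paper's intended derivation: the paper states Corollary \ref{4.7} without proof, leaving it as an immediate consequence of Proposition \ref{very} together with Corollary \ref{4.3}, exactly as you spell out. The only cosmetic redundancy is that after Proposition \ref{very} already yields that $X$ is d-openly generated, you route back through Theorem \ref{4.1} before invoking Corollary \ref{4.3}; one could instead appeal directly to the argument in the proof of Corollary \ref{4.3} (d-open surjections between compact Hausdorff spaces are open, and a compact $X$ dense in $\underleftarrow{\lim}S$ equals the limit), but your path is equally valid.
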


Corollary \ref{4.7} implies the following result of Shchepin
\cite[Theorem 18]{s81} which has been proved by different methods:
If the image of a $\kappa$-metrizable compact Hausdorff space $X$
under a continuous map is perfectly $\kappa$-normal, then $X$ is
$\kappa$-metrizable too.

Let us also mention that, according to Shapiro's result \cite{sh},
continuous images of $\kappa$-metrizable compacta have special
spectral representations. This result implies that any such an image
is $\mathrm I$-favorable.


\end{document}